\newtheorem{theorem}{Theorem}[section]
\newtheorem{lemma}[theorem]{Lemma}
\newtheorem{proposition}[theorem]{Proposition}
\theoremstyle{definition}
\newtheorem{definition}[theorem]{Definition}
\newtheorem{example}[theorem]{Example}
\theoremstyle{remark}
\newtheorem{remark}[theorem]{Remark}
\numberwithin{equation}{section}
\def\EE{{\mathcal{E}}}
\def\FF{{\mathcal{F}}}
\def\ss{\mathtt{s}}
\def\GGG{\mathbf{G}}
\def\SSS{\mathbf{S}}
\def\RRR{\mathbf{R}}
\def\tt{\mathtt{t}}
\def\sss{\overset{\circ}{\ss}}
\def\JJJ{\overset{\circ}{J}}
\begin{document}

\title[Weak convergence]{Weak convergence of regular Dirichlet subspaces}


\author{Liping Li*}
\address{Institute of Applied Mathematics, Academy of Mathematics and Systems Science, Chinese Academy of Sciences, Beijing 100190, China.}
\email{liping\_li@amss.ac.cn}
\thanks{*Corresponding author.}

\author{Toshihiro Uemura}
\address{Department of Mathematics, Kansai University, Yamate, Suita, Osaka, Japan.}
\email{t-uemura@kansai-u.ac.jp}

\author{Jiangang Ying**}
\address{School of Mathematical Sciences, Fudan University, Shanghai 200433, China.}
\email{jgying@fudan.edu.cn}
\thanks{**Research supported in part by NSFC grant 11271240.}


\subjclass[2000]{Primary 31C25; Secondary 60F05}



\keywords{Dirichlet form, regular Dirichlet subspace, Mosco convergence, weak convergence.}

\begin{abstract}
In this paper we shall prove the weak convergence of the associated diffusion processes of regular subspaces with monotone characteristic sets for a fixed Dirichlet form. More precisely, given a fixed 1-dimensional diffusion process and a sequence of its regular subspaces, if the characteristic sets of regular subspaces are decreasing or increasing, then their associated diffusion processes are weakly convergent to another diffusion process. This is an extended result of \cite{SL15}.
\end{abstract}

\maketitle

\section{Introduction}\label{SEC1}

 Roughly speaking, for a fixed Dirichlet form, a regular Dirichlet subspace is its closed subspace with Dirichlet and regular properties. This terminology was first raised by M. Fukushima and J. Ying in \cite{FY03} \cite{FY04}, then they and their co-authors did a series of works on this topic, for example \cite{FFY05} \cite{FHY10} \cite{LY14} and \cite{LY15}. To introduce this conception, let $E$ be a locally compact separable metric space and $m$  a fully supported Radon measure on $E$. Then $L^2(E,m)$ is a Hilbert space, and its norm and inner product are denoted by $\|\cdot\|_m$ and $(\cdot, \cdot)_m$. The definitions of Dirichlet form and regularity are standard, and we refer them to \cite{CF12} and \cite{FOT11}. Further let $(\EE,\FF), (\EE',\FF')$ be two regular Dirichlet forms on $L^2(E,m)$. Then $(\EE',\FF')$ is called a \emph{regular Dirichlet subspace}, or a \emph{regular subspace} in abbreviation, if
 \[
 	\FF'\subset \FF,\quad \EE(u,v)=\EE'(u,v),\quad u,v\in \FF'. 
 \]
 We use $(\EE',\FF')\prec (\EE,\FF)$ to stand for that $(\EE',\FF')$ is a regular subspace of $(\EE, \FF)$. 

Recently, one of us with his co-author considered the Mosco convergence on regular subspaces of 1-dimensional diffusion process in \cite{SL15}. In those settings, the state space $E$ is $I=(a,b)$, an open interval, and $m$ is a fixed and fully supported Radon measure on $I$. Then $L^2(I,m)$ is a Hilbert space. The quadratic form 
\[
	(\mathcal{A},\mathcal{G}):=(\EE^{(\mathtt{s},m)}, \FF^{(\mathtt{s},m)}_0)
\]
 is a regular Dirichlet form on $L^2(I,m)$ associated with the scaling function $\mathtt{s}$, i.e. a strictly increasing and continuous function on $I$. More precisely, 
\begin{equation}
	\begin{aligned}
		&\FF^{(\mathtt{s},m)}_0:=\{u\in \FF^{(\mathtt{s},m)}:  u(a)~\text{or}~u(b)=0, \text{if}~a~\text{or}~b~\text{is}~\ss\text{-regular}\}, \\
		&\EE^{(\mathtt{s},m)}(u,v)=\frac{1}{2}\int_I \frac{du}{d\mathtt{s}}\frac{dv}{d\mathtt{s}}d\mathtt{s}, \quad u,v\in \FF^{(\mathtt{s},m)}_0,
	\end{aligned}
\end{equation}
where 
\begin{equation}
	\FF^{(\mathtt{s},m)}:=\bigg\{u\in L^2(I,m): u\ll \mathtt{s}, \frac{du}{d\mathtt{s}}\in L^2(I, d\mathtt{s})\bigg\}.
\end{equation}
We refer the above terminologies to \cite{CF12} \cite{FHY10} and \cite{SL15}. 
 Note that 
\[
	C_c^\infty\circ \ss:=\{u=\varphi\circ \ss: \varphi\in C_c^\infty(J)\}
\]
is a special standard core of $(\mathcal{A},\mathcal{G})$, where $J=\ss(I)$. Its associated Hunt process is denoted by $\mathbf{M}$. Hereafter, we always fix this regular Dirichlet form $(\mathcal{A},\mathcal{G})$ and its associated Hunt process $\mathbf{M}$. Let $\{(\EE^n,\FF^n):n\geq 1\}$ be a sequence of regular subspaces of $(\mathcal{A},\mathcal{G})$. In other words, for each $n$, 
\[
	\FF^n\subset \mathcal{G},\quad \EE^n(u,v)=\mathcal{A}(u,v),\quad u,v\in \FF^n. 
\]
Then for each $n$, there exists another scaling function $\ss_n$ such that (see Proposition~2.2 of \cite{SL15})
\[
	\ss_n\ll \ss, \quad \frac{d\ss_n}{d\ss}=0\text{ or }1,~d\ss\text{-a.e.}
\]
and 
\[
	(\EE^n,\FF^n)=(\EE^{(\ss_n,m)},\FF^{(\ss_n,m)}_0). 
\]
Set 
\begin{equation}
	G_n:=\bigg\{x\in I: \frac{d\ss_n}{d\ss}=1\bigg\}
\end{equation}
in the sense of $d\ss$-a.e., which is called the characteristic set of $(\EE^n,\FF^n)$. We already illustrated in \cite{LY15} that the characteristic set $G_n$ is an essential character of regular subspace, see also Lemma~2.3 of \cite{SL15}. The associated diffusion process of $(\EE^n,\FF^n)$ is denoted by $\mathbf{X}^n$. Let $(\EE,\FF)$ be another regular subspace of $(\mathcal{A},\mathcal{G})$, whose scaling function is $\ss_\infty$ and characteristic set is denoted by $G$. Its associated diffusion process is denoted by $\mathbf{X}$. We consider two situations: 
\begin{description}
\item[(D)] $G_n\downarrow G$ in the sense of $d\ss$-a.e., i.e. 
\[
	G_n\supset G_{n+1},\quad \bigcap_{n\geq 1}G_n=G,\quad d\ss\text{-a.e.}
	\]
	\item[(U)] $G_n\uparrow G$ in the sense of $d\ss$-a.e., i.e. 
\[
	G_n\subset G_{n+1},\quad \bigcup_{n\geq 1}G_n=G,\quad d\ss\text{-a.e.}
\]
\end{description}
In \cite{SL15}, the authors proved that in \textbf{(D)} and part of \textbf{(U)} (i.e. $G_n$ is open and $G=I$), $(\EE^n,\FF^n)$ converges  in the sense of Mosco as $n\rightarrow \infty$. 

In this paper, we shall extend the results of \cite{SL15}. 
Our main result stated in \S\ref{SECM} illustrates that in the cases of  \textbf{(D)} and \textbf{(U)}, under some mild conditions $(\EE^n,\FF^n)$ is not only  Mosco-convergent to $(\EE,\FF)$, but its associated diffusion process $\mathbf{X}^n$ is also convergent to $\mathbf{X}$ in the weak sense.

 Before proving the main result in \S\ref{SECP}, we shall deeply reconsider the Mosco convergence of \textbf{(D)} and \textbf{(U)} in \S\ref{SEC2}. We find if the characteristic sets are decreasing or increasing to another set, which is not necessarily a characteristic set, then their associated regular subspaces are Mosco-convergent to another Dirichlet form (may be in the wide sense), which is related to the limitation of characteristic sets, see Theorem~\ref{COR25} and Theorem~\ref{COR33}. We shall also give some interesting examples. For instance, a sequence of regular Dirichlet forms may converge to a ``zero'' Dirichlet form in the sense of Mosco, see Example~\ref{EXA26}. Particularly, for Case \textbf{(U)}, we shall prove the Mosco convergence in Theorem~\ref{COR33} without any other assumptions. In other word, we may delete all other conditions in Theorem~4.1 of \cite{SL15}.

\section{Main results}\label{SECM}

We first make the assumption:

\begin{description}
\item[(H1)] $\mathbf{X}^n$ and $\mathbf{X}$ are both conservative. 
\end{description}

\begin{remark}
For diffusion process $\mathbf{X}^n$, it is conservative if and only if neither $a$ nor $b$ is approachable in finite time, i.e. for $c\in I$,
\[
	\int_a^c m\left((x,c)\right)\ss_n(dx)=\infty,\quad \left(\text{resp. }\int_c^b m\left((c,x)\right)\ss_n(dx)=\infty \right).
\]
Thus in Case (\textbf{U}), (\textbf{H1}) is equivalent to that $\mathbf{X}^1$ is conservative, and in Case (\textbf{D}), it is equal to that $\mathbf{X}$ is conservative. Roughly speaking, we only need the conservativeness of the smallest Dirichlet space in the sequence. 

Note that this condition is mainly used to guarantee the Lyons-Zheng decomposition for all $T>0$ (without the restriction $T<\zeta$, where $\zeta$ is the life time of relevant diffusion process). 
\end{remark}

 Since $\mathbf{X}^n$ and $\mathbf{X}$ are both diffusion processes, we may assume that they share the same sample path space
\[
	\Omega=C\left([0,\infty), I\right)\subset C\left([0,\infty),\mathbf{R}\right). 
\]
In other words, $\Omega$ may be regarded as a subspace of $C\left([0,\infty),\mathbf{R}\right)$. Note that $C\left([0,\infty),\mathbf{R}\right)$ (and hence $\Omega$) is a separable metric space with its standard metric (see \cite{B99}). Define a class of trajectory functions on $C([0,\infty),\mathbf{R})$: 
\[
	Z_t(\omega)=\omega(t),\quad \omega\in C([0,\infty),\mathbf{R}),~ t\geq 0.
\]
Naturally, $\mathbf{X}^n$ (resp. $\mathbf{X}$) corresponds to a probability measure class $(\mathbf{P}^x_n)_{x\in I}$ (resp. $(\mathbf{P}^x)_{x\in I}$) on $\Omega$, and $Z=(Z_t)_{t\geq 0}$ may be regarded as their common trajectory function class. Let $\{\mu_n,\mu: n\geq 1\}$ be a class of probability measures on $I$, and define
\[
\begin{aligned}
	&\mathbf{P}^{\mu_n}_n(\cdot):=\int_{x\in I}\mu_n(dx)\mathbf{P}^x_n(\cdot), \\
	&\mathbf{P}^\mu(\cdot):=\int_{x\in I}\mu(dx)\mathbf{P}^x(\cdot).
\end{aligned}\]
They are still probability measures on $\Omega$. We make the following conditions on $\{\mu_n,\mu: n\geq 1\}$.

\begin{description}
\item[(H2)] \begin{itemize}
\item[(i)] $\{\mu_n:n\geq 1\}$ is tight. 
\item[(ii)] $\mu_n(dx)=g_n(x)m(dx), \mu(dx)=g(x)m(dx)$ and $g_n\rightarrow g$ in $L^1(I,m)$ and $L^2(I,m)$ as $n\rightarrow\infty$. 
\end{itemize}
\end{description}

\begin{remark}
	For example, if $\mu_n(dx)=\mu(dx)=g(x)m(dx)$ and $g\in L^2(I,m)$ (for instance, $g$ is bounded), then (\textbf{H2}) is satisfied. 
	
	The second term of \textbf{(H2)} is mainly used to prove the weak convergence of finite dimensional distributions in \S\ref{SEC32}. 
\end{remark}

We have the last assumption as follows:

\begin{description}
\item[(H3)] Let $\overset{\circ}{\ss}$ be the scaling function of the smallest Dirichlet space in the sequence (i.e. for Case (\textbf{U}), $\overset{\circ}{\ss}=\ss_1$; for Case (\textbf{D}), $\overset{\circ}{\ss}=\ss_\infty$). Assume
\[
	\overset{\circ}{\ss}\ll m
\]
and 
\[
	\varphi:=\frac{d\overset{\circ}{\ss}}{dm}
\]
is bounded. 
\end{description}

\begin{remark}
	For the classical case, i.e. $(\mathcal{A},\mathcal{G})=(\frac{1}{2}\mathbf{D},H^1(\mathbf{R}))$, in other words, $\mathbf{M}$ is 1-dim Brownian motion, we know that (\textbf{H3}) is always right. 
\end{remark}

\begin{theorem}\label{THM1}
For the cases \textbf{(D)} and \textbf{(U)},	under the conditions (\textbf{H1}), (\textbf{H2}) and (\textbf{H3}), $\mathbf{P}^{\mu_n}_n$ is weakly convergent to $\mathbf{P}^\mu$ as $n\rightarrow \infty$. 
\end{theorem}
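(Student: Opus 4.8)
The plan is to establish weak convergence of $\mathbf{P}^{\mu_n}_n$ to $\mathbf{P}^\mu$ on $\Omega \subset C([0,\infty),\mathbf{R})$ by the standard two-part route: (i) tightness of the family $\{\mathbf{P}^{\mu_n}_n : n\geq 1\}$, and (ii) convergence of the finite-dimensional distributions. Since $\Omega$ is a separable metric space, this suffices. For the finite-dimensional distributions I would lean heavily on the Mosco convergence of $(\EE^n,\FF^n)$ to $(\EE,\FF)$ established in Theorem~\ref{COR25} and Theorem~\ref{COR33} of \S\ref{SEC2}, which by the Mosco--Kuwae--Shioya theory is equivalent to strong convergence of the associated $L^2(I,m)$-semigroups $T^n_t \to T_t$ (and resolvents). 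Combined with hypothesis \textbf{(H2)}(ii), namely $g_n \to g$ in $L^2(I,m)$, this gives for bounded continuous $f_1,\dots,f_k$ and $0\leq t_1<\cdots<t_k$ that $\mathbf{E}^{\mu_n}_n[f_1(Z_{t_1})\cdots f_k(Z_{t_k})] \to \mathbf{E}^\mu[f_1(Z_{t_1})\cdots f_k(Z_{t_k})]$, by writing the expectation as an iterated application of the semigroups against $g_n\, m$ and passing to the limit one factor at a time; the $L^1$ convergence in \textbf{(H2)}(ii) handles the outermost integration and uniform boundedness of the semigroups controls the errors. One subtlety here is that bounded continuous functions need not be in $L^2(I,m)$ on an infinite interval, so I would first approximate each $f_j$ by compactly supported functions using the tightness-type control on where the processes live, then apply the $L^2$ strong semigroup convergence to the truncated functions.

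For tightness, the key tool flagged in the remarks is the Lyons--Zheng forward-backward martingale decomposition, which under \textbf{(H1)} (conservativeness) is available for every $T>0$ without the restriction $T<\zeta$. The idea is to control the modulus of continuity of the path $Z$ under $\mathbf{P}^{\mu_n}_n$ uniformly in $n$. Working with the $\sss$-coordinate (via \textbf{(H3)}, where $\sss$ is the scaling function of the smallest space in the sequence and $d\sss/dm = \varphi$ is bounded), the processes $\sss_n \circ Z$ — or rather a common additive-functional representation — are in natural scale, hence local martingales under each $\mathbf{P}^x_n$; the Lyons--Zheng decomposition expresses them as averages of a forward and a time-reversed martingale with the same quadratic variation, which is an additive functional whose Revuz measure is controlled by $d\sss_n \leq d\sss$. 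Because $d\sss_n/d\sss \in \{0,1\}$, all these quadratic variations are dominated by a single $n$-independent object, giving uniform estimates $\mathbf{E}^{\mu_n}_n[(\sss_n(Z_t) - \sss_n(Z_s))^4]$ or the analogous moment bounds needed for Kolmogorov--Chentsov-type tightness. Boundedness of $\varphi$ in \textbf{(H3)} then transfers the modulus-of-continuity control from the $\sss$-scale back to the original $I$-coordinate, and \textbf{(H2)}(i), tightness of the initial laws $\mu_n$, pins down the starting points; together these yield tightness of $\{\mathbf{P}^{\mu_n}_n\}$ in $C([0,\infty),\mathbf{R})$.

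Once tightness and convergence of finite-dimensional distributions are both in hand, Prohorov's theorem gives that every subsequence has a further weakly convergent subsequence, and the limit of any such subsequence must have the finite-dimensional distributions of $\mathbf{P}^\mu$; since finite-dimensional distributions determine a Borel measure on $C([0,\infty),\mathbf{R})$, the whole sequence converges weakly to $\mathbf{P}^\mu$, and since the limit charges $\Omega$, we are done.

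I expect the main obstacle to be the uniform-in-$n$ tightness estimate: one must produce moment bounds on path increments that do not degenerate as $n\to\infty$, and the cases \textbf{(D)} and \textbf{(U)} behave differently (in \textbf{(D)} the spaces shrink, in \textbf{(U)} they grow, so the "smallest" space — and hence the dominating quadratic variation — is $\ss_\infty$ in one case and $\ss_1$ in the other, which is exactly why \textbf{(H3)} is phrased in terms of $\sss$). Making the Lyons--Zheng decomposition work simultaneously for all $n$ under a common reference additive functional, and carefully handling the time-reversal (which uses the $m$-symmetry and conservativeness \textbf{(H1)}), is the technical heart; the finite-dimensional convergence, by contrast, is a fairly mechanical consequence of Mosco convergence plus \textbf{(H2)}.
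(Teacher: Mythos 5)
Your overall strategy coincides with the paper's: finite-dimensional convergence from the Mosco convergence of Theorem~\ref{COR25} and Theorem~\ref{COR33} (hence strong $L^2$-convergence of the semigroups) together with \textbf{(H2)}, and tightness via the Lyons--Zheng decomposition, available for every fixed $T$ by \textbf{(H1)}, with \textbf{(H3)} supplying an $n$-independent control of the relevant quadratic variation after a change of scale by $\sss$. The finite-dimensional part is essentially the paper's argument (the paper simply takes test functions in $b\mathcal{B}(I)\cap L^2(I,m)$, so your truncation worry is harmless). However, two steps in your tightness argument do not work as written.

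First, the domination of the quadratic variations. With your own convention ($\sss$ = scale of the smallest space), the inequality $d\ss_n\le d\sss$ is backwards: the smallest space has the smallest characteristic set, so $d\sss\le d\ss_n\le d\ss$, and in general $\ss_n\not\ll\sss$, so ``$d\ss_n/d\sss\in\{0,1\}$'' does not even make sense; if instead you meant the master scale $\ss$, then $d\ss_n\le d\ss$ is true, but \textbf{(H3)} bounds $d\sss/dm$, not $d\ss/dm$ or $d\ss_n/dm$, so no uniform bound on the quadratic variation follows. The paper's resolution is to apply the Fukushima/Lyons--Zheng decomposition to the single fixed function $f=\sss$, which lies in $\FF^n_{\mathrm{loc}}$ for every $n$: since $(d\sss/d\ss_n)^2=d\sss/d\ss_n$, its energy measure under $\mathbf{X}^n$ equals $d\sss$ for every $n$, whence $\langle M^{[f]}\rangle_t=\int_0^t\varphi(Z_s)\,ds\le\|\varphi\|_\infty\,t$ uniformly in $n$, and the Dambis--Dubins--Schwarz time change reduces the modulus-of-continuity estimate to that of one Brownian motion, independent of $n$. (No local-martingale property of $\ss_n(Z)$ is needed; the time reversal in Lyons--Zheng cancels the zero-energy part.) Second, boundedness of $\varphi$ does not ``transfer the modulus-of-continuity control back to the $I$-coordinate'': $\varphi=d\sss/dm$ controls $\sss$ against $m$ and says nothing about a uniform modulus for $\sss^{-1}$. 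The paper avoids any such quantitative transfer by proving (Lemma~\ref{LM51}) that $\omega\mapsto\sss\circ\omega$ is a homeomorphism of the path spaces, so weak convergence of the $\sss$-transformed laws is equivalent to weak convergence of $\mathbf{P}^{\mu_n}_n$; one may therefore assume $\sss(x)=x$ from the outset, and the Lyons--Zheng estimate for $f=\sss$ then bounds the increments $Z_t-Z_s$ themselves. With these two corrections your outline matches the paper's proof.
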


We know from \cite{LY14} that if the characteristic set of regular subspace is open, we may give a beautiful and deep description about its structure. However, \cite{LY14} also pointed out not each regular subspace has an open characteristic set.
Fortunately, the above theorem tells us although we cannot directly describe the structure of regular subspace for some general case (the characteristic set is not open), we may find some other ``good'' regular subspaces, whose associated diffusion processes weakly converge to the bad one. 

For example, assume that $(\mathcal{A}, \mathcal{G})$ corresponds to the 1-dimensional Brownian motion, and $(\EE,\FF)$ is one of its regular subspaces with the characteristic set $G$ (may not be open). Because of the regularity of Lebesgue measure, we may find a sequence of open sets $\{G_n:n\geq 1\}$ such that $G_n\downarrow G$ a.e. It follows that (see \cite{LY14}) $G_n$ is still a characteristic set of some regular subspace, say $(\EE^n,\FF^n)$, of $(\mathcal{A},\mathcal{G})$, which can be described very well through the technique of \cite{LY14}. By Theorem~\ref{THM1}, the associated diffusion process of $(\EE^n,\FF^n)$ is weakly convergent to that of $(\EE,\FF)$. 

\section{Mosco convergence}\label{SEC2}

In this section, we shall first introduce some results on Mosco convergence of monotone Dirichlet spaces in \S\ref{SECMC}. Note that the discussions in \S\ref{SECMC} are valid for general settings, not only for the cases on the open interval $I$. Then in \S\ref{SECM2} we shall extend the main results of \cite{SL15} to more general situations.

\subsection{Mosco convergence of monotone Dirichlet spaces}\label{SECMC}

We need to point out that in this part $(\EE^n,\FF^n)$ and $(\mathcal{A},\mathcal{G})$ are general Dirichlet forms on $L^2(E,m)$ (not only the Dirichlet forms on $L^2(I,m)$ in \S\ref{SEC1}). 

In the context of U. Mosco \cite{M94}, the Mosco convergence may be defined for closed forms in the wide sense, i.e. the quadratic forms which satisfy all conditions of closed forms except for the denseness of  domains in $L^2(E,m)$. Next, we shall write down its specific definition for handy reference.  For any quadratic form $(\EE,\FF)$ on $L^2(E,m)$, we always extend the domain of $\EE$ to $L^2(E,m)$ by 
\[
	\EE(u,u)=\infty, \quad u\in L^2(E,m)\setminus \mathcal{F}. 
\]
Furthermore, we say $u_n$ converges to $u$ weakly in $L^2(E,m)$, if  for any $v\in L^2(E,m)$, $(u_n,v)_m\rightarrow (u,v)_m$ as $n\rightarrow \infty$, and strong convergence in $L^2(E,m)$ means $\|u_n-u\|_m\rightarrow 0$ as $n\rightarrow \infty$. 

\begin{definition}\label{DEF141}
Let $\{(\EE^n,\FF^n):n\geq 1\}$ be a sequence of closed forms in the wide sense and $(\EE, \FF)$ another closed form in the wide sense  on $L^2(E,m)$. Then $(\EE^n, \FF^n)$ is said to be convergent to $(\EE, \FF)$ in the sense of Mosco as $n\rightarrow \infty$, if
	\begin{itemize}
	 \item[(a)] for any sequence $\{u_n: n\geq 1\}$ of functions in $L^2(E,m)$, which is convergent to another function $u\in L^2(E,m)$ weakly, it holds that
	 \begin{equation}\label{141a}
	  	\liminf_{n\rightarrow \infty} \EE^n(u_n,u_n)\geq \EE(u,u);
	 \end{equation}
	 \item[(b)] for any function $u\in L^2(E,m)$, there always exists a sequence $\{u_n:n\geq 1\}$ of functions in $L^2(E,m)$, which is convergent to $u$ strongly as $n\rightarrow \infty$, such that
	 \begin{equation}\label{141b}
	  	\limsup_{n\rightarrow \infty} \EE^n(u_n,u_n)\leq \EE(u,u). 
	 \end{equation}
	\end{itemize}
\end{definition}

Note that every closed form in the wide sense possesses an associated semigoup on $L^2(E,m)$, which is not necessarily strongly continuous. Let $(T_t^n)_{t\geq 0}$ and $(T_t)_{t\geq 0}$ be the semigroups of $(\EE^n,\FF^n)$ and $(\EE,\FF)$. Then $(\EE^n,\FF^n)$ is convergent to $(\EE,\FF)$ in the sense of Mosco, if and only if for any $f\in L^2(E,m)$ and $t\geq 0$,  $T_t^n f$ is strongly convergent to $T_tf$ in $L^2(E,m)$. 

\subsubsection{Decreasing case}

We always fix a regular Dirichlet form $(\mathcal{A},\mathcal{G})$ on $L^2(E,m)$. A decreasing sequence of regular subspaces means a sequence of regular subspaces $\{(\EE^n,\FF^n):n\geq 1\}$ of $(\mathcal{A},\mathcal{G})$, which satisfies
\[
	\FF^1\supset \FF^2 \supset \cdots \supset \FF^n\supset \cdots. 
\]
Define 
\begin{equation}\label{EQ2FNF}
	\FF_\infty:=\bigcap_{n\geq 1}\FF^n,\quad \EE_\infty(u,v):=\mathcal{A}(u,v),\quad u,v\in \FF_\infty. 
\end{equation}
Clearly, $\FF_\infty$ may not be dense in $L^2(E,m)$, and $(\EE_\infty, \FF_\infty)$ may not be a real Dirichlet form. But it is actually a Dirichlet form in the wide sense, which means that it satisfies all conditions of Dirichlet form except for the denseness of $\FF_\infty$ in $L^2(E,m)$. We refer its specific definition to \S1.3 of \cite{FOT11}.

\begin{lemma}
	Let $(\EE_\infty,\FF_\infty)$ be the quadratic form defined by \eqref{EQ2FNF}. Then it is a Dirichlet form on $L^2(E,m)$ in the wide sense. 
\end{lemma}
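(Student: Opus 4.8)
The plan is to verify, one by one, the defining properties of a Dirichlet form in the wide sense for $(\EE_\infty,\FF_\infty)$: symmetry and nonnegativity of the bilinear form, closedness (completeness of $\FF_\infty$ under the $\mathcal{A}_1$-norm $\|u\|_{\mathcal{A}_1}^2 := \mathcal{A}(u,u)+\|u\|_m^2$), and the Markovian (unit contraction) property. Symmetry and nonnegativity are immediate since $\EE_\infty$ is the restriction of the nonnegative symmetric form $\mathcal{A}$ to the linear subspace $\FF_\infty$, and $\FF_\infty$ is linear as an intersection of linear subspaces. So the content is in closedness and the Markovian property.

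First I would prove closedness. Let $\{u_k\}\subset \FF_\infty$ be $\mathcal{A}_1$-Cauchy. Since each $(\EE^n,\FF^n)$ is a closed form with $\EE^n = \mathcal{A}$ on $\FF^n$, and $\FF_\infty\subset\FF^n$, the sequence is $\mathcal{A}_1$-Cauchy in each $(\FF^n,\mathcal{A}_1)$; hence it converges in that space to some limit. But all these limits agree, because $\mathcal{A}_1$-convergence implies $L^2(E,m)$-convergence, and the $L^2$ limit $u$ is unique. Therefore $u\in\FF^n$ for every $n$, so $u\in\FF_\infty$, and $\|u_k-u\|_{\mathcal{A}_1}\to 0$ using, say, the closedness of $(\FF^1,\mathcal{A}_1)$. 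This shows $(\EE_\infty,\FF_\infty)$ is a closed form in the wide sense.

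Next I would check the Markovian property: for $u\in\FF_\infty$, the normal contraction $v := (0\vee u)\wedge 1$ lies in $\FF_\infty$ with $\mathcal{A}(v,v)\le\mathcal{A}(u,u)$. This again follows levelwise: each $(\EE^n,\FF^n)=(\mathcal{A},\FF^n)$ is Markovian (being a Dirichlet form), so $v\in\FF^n$ and $\mathcal{A}(v,v)\le\mathcal{A}(u,u)$ for every $n$; intersecting over $n$ gives $v\in\FF_\infty$, and the inequality is independent of $n$. Combined with the previous step, $(\EE_\infty,\FF_\infty)$ satisfies every axiom of a Dirichlet form except possibly density of $\FF_\infty$ in $L^2(E,m)$, which is precisely the definition of a Dirichlet form in the wide sense per \S1.3 of \cite{FOT11}.

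I do not expect a genuine obstacle here; the proof is a routine intersection argument. The one point requiring a little care is the uniqueness-of-limit step in the closedness argument — making explicit that the $\mathcal{A}_1$-limits taken inside the various $(\FF^n,\mathcal{A}_1)$ coincide, which one pins down by passing to the ambient $L^2(E,m)$ topology where limits are unique. Everything else is a direct transfer of the corresponding property from each $(\EE^n,\FF^n)$ to the intersection.
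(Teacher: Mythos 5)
Your proof is correct and follows essentially the same route as the paper: closedness via the levelwise $\mathcal{A}_1$-Cauchy argument with uniqueness of the $L^2$ limit, and the Markovian property transferred from each $\FF^n$ to the intersection. The only cosmetic difference is that you verify the unit contraction $(0\vee u)\wedge 1$ while the paper checks an arbitrary normal contraction $\psi$; for a closed form this is equivalent (cf.\ Theorem~1.4.1 of \cite{FOT11}), so nothing is lost.
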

\begin{proof}
Clearly, $(\EE_\infty,\FF_\infty)$ is a bilinear symmetric quadratic form on $L^2(E,m)$. Thus it suffices to prove the closeness and Dirichlet property of $(\EE_\infty,\FF_\infty)$. Set 
\[
	\EE_{\infty,1}(u,v):=\EE_\infty(u,v)+(u,v)_m,\quad u,v\in \FF_\infty. 
\] 
Assume that $\{u_k:k\geq 1\}$ is an $\EE_{\infty,1}$-Cauchy sequence in $\FF_\infty$. For any $n\geq 1$, since $\FF_\infty\subset \FF^n$ and $\EE^n|_{\FF_\infty \times\FF_\infty}=\EE_\infty$,  it follows that $\{u_k: k\geq 1\}$ is also an $\EE^n$-Cauchy sequence in $\FF^n$. Hence there is a function $v_n\in \FF^n$ such that  $\|u_k-v_n\|_{\EE^n_1}\rightarrow 0$ as $k\rightarrow \infty$. Particularly, $u_k$ is $L^2(E,m)$-convergent to $v_n$ as $k\rightarrow \infty$. But $n$ is arbitrary for the existence of $v_n$. That implies $v_1=v_2=\cdots =v_n=\cdots$. Denote this common function by $v$. Then $v\in \cap_{n\geq 1}\FF^n=\FF_\infty$ and 
\[
\EE_{\infty,1}(u_k-v,u_k-v)=\EE^1_1(u_k-v_1,u_k-v_1)\rightarrow 0
\]
as $k\rightarrow \infty$. Therefore, the closeness of $(\EE_\infty,\FF_\infty)$ is proved. Finally, we turn to prove the Dirichlet property of $(\EE_\infty,\FF_\infty)$. Let $u$ be arbitrary function in $\FF_\infty$ and $\psi$ an arbitrary normal contraction on $\mathbf{R}$ (i.e. for any $t,s \in \mathbf{R}$, $|\psi(t)|\leq |t|, |\psi(t)-\psi(s)|\leq |t-s|$). Since for any $n\geq 1$, $u\in \FF_\infty\subset \FF^n$, it follows that $\psi\circ u\in \FF^n$ and $\EE^n(\psi\circ u,\psi\circ u)\leq \EE^n(u, u)$. That implies 
\[
	\psi\circ u\in \bigcap_{n\geq 1}\FF^n=\FF_\infty, 
\]
and naturally, 
\[
	\EE_\infty(\psi\circ u,\psi\circ u)=\EE^n(\psi\circ u,\psi\circ u),\quad  \EE_\infty(u,u)=\EE^n(u,u).
\]
 Hence
\[
	\psi\circ u\in \FF_\infty, \quad \EE_\infty(\psi\circ u,\psi\circ u)\leq \EE_\infty(u,u). 
\]
That completes the proof. 
\end{proof}

\begin{remark}\label{RM22}
Set $\mathcal{C}_n:=\FF^n\cap C_c(E)$, which is a special standard core of $(\EE^n,\FF^n)$, and $\mathcal{C}:=\cap_{n\geq 1} \mathcal{C}_n$. One may easily check that $(\EE_\infty,\FF_\infty)$ is regular if and only if $\mathcal{C}$ is a special standard core of $(\EE_\infty,\FF_\infty)$. In fact, 
\[
	\FF_\infty\cap C_c(E)=\left(\cap_{n\geq 1} \FF^n\right)\bigcap C_c(E)=\bigcap_{n\geq 1}\left(\FF^n\cap C_c(E)\right)=\mathcal{C}.
\]

However, the regularity of $(\EE_\infty, \FF_\infty)$ is not always satisfied, and indeed an example that $(\EE_\infty, \FF_\infty)$ is a Dirichlet form in the wide sense will be shown in Example~\ref{EXA26}. 
Furthermore, Example~3.4 of \cite{SL15} provided an example, in which the Mosco limitation $(\EE_\infty, \FF_\infty)$ in Proposition~\ref{THM22} is a real Dirichlet form (not only in the wide sense) but not a regular one. On the other hand, the first part of Example~5.1 in  \cite{SL15} is an example to illustrate that $(\EE_\infty,\FF_\infty)$ may be a regular Dirichlet form. 
\end{remark}

The following lemma asserts that the sequence $\{(\EE^n,\FF^n):n\geq 1\}$ of regular subspaces is convergent to the quadratic form $(\EE_\infty, \FF_\infty)$ in the sense of Mosco. Indeed, the regularities of $\{(\EE^n,\FF^n):n\geq 1\}$ are not necessary for this fact. We refer a similar result to Theorem~3.1 of \cite{S78}. 

\begin{proposition}\label{THM22}
For a given regular Dirichlet form $(\mathcal{A},\mathcal{G})$ on $L^2(E,m)$, assume $\{(\EE^n,\FF^n):n\geq 1\}$ is a decreasing sequence of regular subspaces of $(\mathcal{A},\mathcal{G})$. The quadratic form $(\EE_\infty, \FF_\infty)$ is defined by \eqref{EQ2FNF}. Then $(\EE^n,\FF^n)$ is convergent to $(\EE_\infty, \FF_\infty)$ in the sense of Mosco as $n\rightarrow \infty$. 
\end{proposition}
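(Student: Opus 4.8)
The proof naturally splits along the two conditions. For condition (b), the existence of a recovery sequence, I would take the trivial choice: given $u \in L^2(E,m)$, simply set $u_n := u$ for all $n$. This is obviously strongly convergent. Then the inequality $\limsup_n \EE^n(u_n,u_n) \le \EE_\infty(u,u)$ must be checked. If $u \notin \FF_\infty$, the right-hand side is $+\infty$ and there is nothing to prove. If $u \in \FF_\infty = \bigcap_n \FF^n$, then $u \in \FF^n$ for every $n$, and since each $(\EE^n,\FF^n)$ is a regular subspace of $(\mathcal{A},\mathcal{G})$ we have $\EE^n(u,u) = \mathcal{A}(u,u) = \EE_\infty(u,u)$ for all $n$; hence the $\limsup$ is in fact an equality. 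So (b) is straightforward.

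The substantive part is condition (a): given $u_n \to u$ weakly in $L^2(E,m)$, show $\liminf_n \EE^n(u_n,u_n) \ge \EE_\infty(u,u)$. First I would reduce to the case where the $\liminf$ is finite, passing to a subsequence $\{n_k\}$ along which $\EE^{n_k}(u_{n_k},u_{n_k})$ converges to this finite value $L$ and is uniformly bounded, say by $M$. The key monotonicity observation is that the domains are decreasing: for any fixed index $N$ and any $n \ge N$, since $\FF^n \subset \FF^N$ and $\EE^n$ agrees with $\mathcal{A}$ (hence with $\EE^N$) on $\FF^n$, we have $u_n \in \FF^N$ and $\EE^N(u_n,u_n) = \EE^n(u_n,u_n) \le M$. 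Thus along the subsequence, for $n_k \ge N$, the functions $u_{n_k}$ lie in the single fixed Hilbert space $(\FF^N, \EE^N_1)$ with uniformly bounded $\EE^N_1$-norm (using that $\|u_{n_k}\|_m$ is bounded, as weakly convergent sequences are norm-bounded). A bounded sequence in a Hilbert space has a weakly convergent subsequence; the weak $\FF^N$-limit must coincide with the $L^2$-weak limit $u$, so $u \in \FF^N$, and by weak lower semicontinuity of the norm, $\EE^N(u,u) \le \liminf_k \EE^N(u_{n_k}, u_{n_k}) \le L$. Since $N$ was arbitrary, $u \in \bigcap_N \FF^N = \FF_\infty$ and $\EE_\infty(u,u) = \EE^N(u,u) \le L = \liminf_n \EE^n(u_n,u_n)$, which is exactly \eqref{141a}.

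I expect the main technical point to be the diagonal/subsequence bookkeeping in condition (a): one extracts a first subsequence to realize the $\liminf$, then for each fixed $N$ a further subsequence to get weak convergence in $\FF^N$, and must argue that these weak limits are all the same function $u$ and that the inequality passes to the full original $\liminf$. This is handled by the standard subsequence principle (every subsequence has a further subsequence on which the desired inequality holds, with a common limit identified by testing against $L^2$ elements, which separate points since $\FF^N \hookrightarrow L^2$ continuously). The regularity of the $(\EE^n,\FF^n)$ plays no role in the argument — only that they are closed forms sharing the common bilinear expression $\mathcal{A}$ on nested domains — consistent with the remark preceding the statement.
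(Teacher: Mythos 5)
Your proposal is correct and follows essentially the same route as the paper: the trivial constant recovery sequence for condition (b), and for condition (a) the reduction, for each fixed $N$, to the single closed form $(\EE^N,\FF^N)$ via the nesting $\FF^n\subset\FF^N$ and the identity $\EE^n(u_n,u_n)=\EE^N(u_n,u_n)$. The only difference is that where the paper simply invokes the fact that a fixed closed form is Mosco-convergent to itself (hence lower semicontinuous along weakly $L^2$-convergent sequences), you prove that lower semicontinuity directly by weak compactness in the Hilbert space $(\FF^N,\EE^N_1)$, which is a legitimate unpacking of the same step.
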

\begin{proof}
We first prove the second term (b) of Definition~\ref{DEF141}. For any function $u\in L^2(E,m)$, let $u_n:=u$ for any $n\geq 1$. If $u\notin \FF_\infty$, then $\EE_\infty(u,u)=\infty$ and clearly
\[
	\limsup_{n\rightarrow \infty}\EE^n(u_n,u_n)\leq \EE_\infty(u,u). 
\]
Now assume that $u\in \FF_\infty$. Particularly, for any $n\geq 1$, $u_n=u\in \FF_\infty\subset \FF^n$. Thus $\EE^n(u_n,u_n)=\EE_\infty(u,u)$, which indicates
\[
	\limsup_{n\rightarrow \infty}\EE^n(u_n,u_n)=\limsup_{n\rightarrow \infty}\EE_\infty(u,u)=\EE_\infty(u,u). 
\]

Next, we turn to prove the first term (a) of Definition~\ref{DEF141}. Assume that $u_n$ is weakly convergent to $u$ in $L^2(E,m)$ as $n\rightarrow \infty$. Without loss of generality, we may assume that $u_n\in \FF^n$ for any $n\geq 1$. In fact, if $u_n\notin \FF^n$, then $\EE^n(u_n,u_n)=\infty$. When we drop $u_n$ from the sequence of functions, the limitation in the left side of \eqref{141a} will decrease. Moreover, if there are only finite $u_n$ such that $u_n\in \FF^n$, then this limitation must be $\infty$ and \eqref{141a} is clear. Fix an integer $N$, for any $n\geq N$, since $u_n\in \FF^n\subset \FF^N$, it follows that $\{u_n:n\geq N\}\subset \FF^N$. Note that a sequence of closed quadratic forms, which are all the same quadratic form, is actually convergent to itself in the sense of Mosco. That implies
\begin{equation}\label{EQ2NNN}
	\liminf_{n\geq N, n\rightarrow \infty}\EE^N(u_n,u_n)\geq \EE^N(u,u). 
\end{equation}
On the other hand, it follows from $u_n\in \FF^n$ that $\EE^N(u_n,u_n)=\EE^n(u_n,u_n)$. If for some integer $N$, $u\notin \FF^N$, then $\EE^N(u,u)=\infty$. From \eqref{EQ2NNN}, we obtain that
\[
	\liminf_{n\rightarrow \infty}\EE^n(u_n,u_n)=\liminf_{n\geq N, n\rightarrow \infty}\EE^N(u_n,u_n)\geq \infty \geq \EE_\infty(u,u). 
\]
If for any integer $N$, $u\in \FF^N$, then it follows that $u\in \cap_{N\geq 1}\FF^N=\FF_\infty$. Clearly, for some integer $N$, $\EE_\infty(u,u)=\EE^N(u,u)$. Therefore, we can deduce from \eqref{EQ2NNN} that
\[
	\liminf_{n\rightarrow \infty}\EE^n(u_n,u_n)=\liminf_{n\geq N, n\rightarrow \infty}\EE^N(u_n,u_n)\geq \EE_\infty(u,u). 
\]
That completes the proof. 
\end{proof}

\subsubsection{Increasing case}

On the contrary, we shall consider the increasing sequences of regular subspaces. We still fix a regular Dirichlet form $(\mathcal{A},\mathcal{G})$ on $L^2(E,m)$. An increasing sequence of regular subspaces means a sequence of regular subspaces $\{(\EE^n,\FF^n):n\geq 1\}$ of $(\mathcal{A},\mathcal{G})$, which satisfies
\[
	\FF^1\subset \FF^2 \subset \cdots \subset \FF^n\subset \cdots. 
\]
Note that $\cup_{n\geq 1}\FF^n\subset \mathcal{G}$, which is a linear space. It follows from the closeness of $(\mathcal{A},\mathcal{G})$ that the quadratic form $(\mathcal{A}, \cup_{n\geq 1}\FF^n)$ is closable on $L^2(E,m)$. Denote the closure of $\cup_{n\geq 1}\FF^n$ in $\mathcal{G}$ relative to the inner product $\mathcal{A}_1$ by $\FF^\infty$, and define
\begin{equation}\label{EQ3EUV}
	\EE^\infty(u,v):=\mathcal{A}(u,v),\quad u,v\in \FF^\infty. 
\end{equation}
We first assert that $\left(\EE^\infty, \FF^\infty\right)$ is a regular Dirichet form on $L^2(E,m)$. 

\begin{lemma}\label{LM31}
The quadratic form $(\EE^\infty, \FF^\infty)$ given above is a regular Dirichlet form on $L^2(E,m)$. Furthermore, if $\mathcal{C}_n$ is a core of $(\EE^n,\FF^n)$ for each $n\geq 1$, then $\mathcal{C}:=\cup_{n\geq 1}\mathcal{C}_n$ is a core of $\left(\EE^\infty,\FF^\infty\right)$.
\end{lemma}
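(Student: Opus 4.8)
The plan is to verify in turn that $(\EE^\infty,\FF^\infty)$ is (i) a closed symmetric form, (ii) Markovian, hence a Dirichlet form, and (iii) regular on $L^2(E,m)$; the last clause about cores then comes out of the regularity argument essentially for free. For (i) there is almost nothing to do: $\cup_{n\geq 1}\FF^n$ is a dense linear subspace of $\mathcal{G}$ (it contains each $\FF^n$, and already $\FF^1$ is dense in $L^2(E,m)$ since $(\EE^1,\FF^1)$ is a genuine regular Dirichlet form), the form $\mathcal{A}$ restricted to it is closable because $(\mathcal{A},\mathcal{G})$ is closed, and $\FF^\infty$ is by definition its $\mathcal{A}_1$-closure inside $\mathcal{G}$; thus $(\EE^\infty,\FF^\infty)$ is a densely defined closed symmetric form, with $\EE^\infty=\mathcal{A}$ on $\FF^\infty$.

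For (ii), the Markov property is checked first on the dense core $\cup_{n\geq1}\FF^n$ and then propagated to the closure. If $u\in\cup_{n\geq1}\FF^n$ then $u\in\FF^n$ for some $n$, and since $(\EE^n,\FF^n)$ is a Dirichlet form, for any normal contraction $\psi$ we have $\psi\circ u\in\FF^n\subset\FF^\infty$ with $\EE^\infty(\psi\circ u,\psi\circ u)=\EE^n(\psi\circ u,\psi\circ u)\le\EE^n(u,u)=\EE^\infty(u,u)$. To pass to a general $u\in\FF^\infty$ I would use the standard approximation: it suffices to check the unit contraction $v\mapsto (0\vee v)\wedge 1$, take $u_k\to u$ in $\mathcal{A}_1$-norm with $u_k\in\cup_{n\geq1}\FF^n$, note $(0\vee u_k)\wedge 1$ is $\mathcal{A}_1$-bounded hence has a weakly convergent subsequence whose limit must be $(0\vee u)\wedge1$, and invoke lower semicontinuity of $\EE^\infty$ under weak convergence together with a Banach--Saks / Cesàro-mean argument to upgrade weak convergence to $\mathcal{A}_1$-convergence of the means; this is the routine way the Markov property survives closure (cf. the treatment in \cite{FOT11}), so I would only indicate it.

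For (iii), regularity, set $\mathcal{C}:=\cup_{n\geq1}\mathcal{C}_n$ where $\mathcal{C}_n$ is a core of $(\EE^n,\FF^n)$ (by hypothesis, or else take $\mathcal{C}_n=\FF^n\cap C_c(E)$, which is a special standard core of the regular form $(\EE^n,\FF^n)$). Then $\mathcal{C}\subset\FF^\infty\cap C_c(E)$. I claim $\mathcal{C}$ is dense in $\FF^\infty$ in the $\mathcal{A}_1$-norm: indeed $\cup_{n\geq1}\FF^n$ is $\mathcal{A}_1$-dense in $\FF^\infty$ by construction, and for fixed $n$ every element of $\FF^n$ is $\mathcal{A}_1=\EE^n_1$-approximable by elements of $\mathcal{C}_n\subset\mathcal{C}$ since $\mathcal{C}_n$ is a core of $(\EE^n,\FF^n)$; chaining the two approximations gives the claim. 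It remains to see $\mathcal{C}$ is uniformly dense in $C_c(E)$, which holds because already $\mathcal{C}_1\subset\mathcal{C}$ is uniformly dense in $C_c(E)$ (it is a special standard core of the regular form $(\EE^1,\FF^1)$). Hence $\mathcal{C}$ is a core of $(\EE^\infty,\FF^\infty)$, and in particular $(\EE^\infty,\FF^\infty)$ is regular, proving both assertions of the lemma.

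The main obstacle is really only the closure step in (ii)—making sure the Markov (normal-contraction) property is not lost when one passes from the dense linear space $\cup_{n\geq1}\FF^n$ to its $\mathcal{A}_1$-closure $\FF^\infty$; everything else is bookkeeping with the definitions and with the fact that $\FF^1$ already carries a regular Dirichlet form, which anchors both denseness in $L^2(E,m)$ and uniform denseness in $C_c(E)$. Once that step is handled by the standard weak-limit-plus-Cesàro argument, the regularity and the core statement follow by concatenating the two evident approximations.
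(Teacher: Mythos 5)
Your proposal is correct and follows essentially the same route as the paper: verify the Markov (normal contraction) property on $\cup_{n\geq 1}\FF^n$ using that each $(\EE^n,\FF^n)$ is a Dirichlet form, pass to the closure via the standard result (Theorem~3.1.1 of \cite{FOT11}, which you sketch rather than cite), and obtain the core property by chaining the $\EE^n_1$-approximation by $\mathcal{C}_n$ with the $\mathcal{A}_1$-density of $\cup_{n\geq 1}\FF^n$ in $\FF^\infty$. Your only additions are cosmetic: you make explicit the uniform density of $\mathcal{C}$ in $C_c(E)$ (which the paper leaves implicit) — noting that this already follows from $\mathcal{C}_1$ being a core of the regular form $(\EE^1,\FF^1)$, no ``special standard'' assumption needed.
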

\begin{proof}
	We first claim that $\left(\EE,\cup_{n\geq 1}\FF^n\right)$ possesses the Dirichlet property. In fact, let $\psi$ be a normal contraction. Take any function $u$ in $\cup_{n\geq 1}\FF^n$. Then there exists an integer $N$ such that $u\in \FF^N$. Since $(\EE^N,\FF^N)$ is a Dirichlet form, it follows that $\psi\circ u\in \FF^N\subset \cup_{n\geq 1}\FF^n$ and 
	\[
		\EE(\psi\circ u,\psi\circ u)\leq \EE(u,u). 
	\]
That implies the Dirichlet property of $(\EE,\cup_{n\geq 1}\FF^n)$. Thus it follows from Theorem~3.1.1 of \cite{FOT11} that $\left(\EE^\infty,\FF^\infty\right)$ is a Dirichlet form on $L^2(E,m)$. 

Next, we turn to prove that $\mathcal{C}$ is a core of $(\EE^\infty,\FF^\infty)$.  Clearly, we only need to prove that $\mathcal{C}$ is dense in $\cup_{n\geq 1}\FF^n$ with the norm $\|\cdot \|_{\EE^\infty_1}$. Indeed, for any function $u\in \cup_{n\geq 1}\FF^n$, there is an integer $N$ such that $u\in \FF^N$. Thus we can take a sequence of functions $\{u_n: n\geq 1\}\subset \mathcal{C}_N$ such that $\|u_n-u\|_{\EE^N_1}\rightarrow 0$ as $n\rightarrow \infty$. Since $\FF^N\subset \FF^\infty$, it follows that $u_n\in \mathcal{C}$ and 
\[
	\|u_n-u\|_{\EE^\infty_1}\rightarrow 0,
\]
as $n\rightarrow \infty$. That completes the proof. 
\end{proof}

 For the increasing sequence $\{(\EE^n,\FF^n):n\geq 1\}$ of regular subspaces, we have an analogical result of Proposition~\ref{THM22}.  We also refer the relevant discussion to Theorem~3.2 of \cite{S78}. 

\begin{proposition}\label{THM32}
	Let $\{(\EE^n,\FF^n):n\geq 1\}$ be an increasing sequence of regular subspaces of $(\mathcal{A},\mathcal{G})$ on $L^2(E,m)$. The quadratic form $(\EE^\infty, \FF^\infty)$ is defined by \eqref{EQ3EUV}. Then $(\EE^n,\FF^n)$ is convergent to $(\EE^\infty, \FF^\infty)$ in the sense of Mosco as $n\rightarrow \infty$. 
\end{proposition}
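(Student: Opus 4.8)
The plan is to verify conditions (a) and (b) of Definition~\ref{DEF141} for the increasing sequence, mirroring the structure of the proof of Proposition~\ref{THM22} but exploiting that here the limiting form $(\EE^\infty,\FF^\infty)$ is obtained by closure rather than intersection. Condition (a), the $\liminf$ inequality, is in fact the easy direction and follows the same monotonicity argument as in the decreasing case: suppose $u_n\to u$ weakly in $L^2(E,m)$; without loss of generality assume $u_n\in\FF^n$ for all $n$ (otherwise $\EE^n(u_n,u_n)=\infty$ and we may discard those terms, or the $\liminf$ is already $\infty$). Since the sequence is \emph{increasing}, $\FF^n\subset\FF^\infty$ for every $n$, and $\EE^n(u_n,u_n)=\mathcal{A}(u_n,u_n)=\EE^\infty(u_n,u_n)$. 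So I am reduced to showing $\liminf_n \EE^\infty(u_n,u_n)\geq \EE^\infty(u,u)$ with all $u_n,u$ in $L^2(E,m)$ and $u_n\to u$ weakly — this is just the lower-semicontinuity of a fixed closed form with respect to weak $L^2$-convergence (equivalently, a constant sequence of closed forms converges to itself in the Mosco sense), which is standard.

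The substantive part is condition (b): given $u\in L^2(E,m)$, I must produce $u_n\to u$ strongly with $\limsup_n\EE^n(u_n,u_n)\leq\EE^\infty(u,u)$. If $u\notin\FF^\infty$ the right side is $\infty$ and $u_n:=u$ works. So assume $u\in\FF^\infty$. By construction $\FF^\infty$ is the $\mathcal{A}_1$-closure of $\bigcup_{n\geq 1}\FF^n$, so there is a sequence $w_k\in\bigcup_n\FF^n$ with $\|w_k-u\|_{\EE^\infty_1}\to 0$; in particular $\EE^\infty(w_k,w_k)\to\EE^\infty(u,u)$ and $\|w_k-u\|_m\to 0$. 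Each $w_k$ lies in some $\FF^{N(k)}$, and since the sequence is increasing, $w_k\in\FF^n$ for all $n\geq N(k)$ with $\EE^n(w_k,w_k)=\EE^\infty(w_k,w_k)$. The idea is then a diagonal selection: choose an increasing sequence of indices $N_k$ with $N_k\geq N(k)$, set $u_n:=w_k$ for $N_k\leq n<N_{k+1}$ (and $u_n:=0$, say, for $n<N_1$). Then $u_n\to u$ strongly in $L^2(E,m)$ because $u_n$ runs through $w_1,w_2,\dots$ which converges strongly to $u$, and for $n$ in the block $[N_k,N_{k+1})$ we have $\EE^n(u_n,u_n)=\EE^n(w_k,w_k)=\EE^\infty(w_k,w_k)$, so $\limsup_n\EE^n(u_n,u_n)=\lim_k\EE^\infty(w_k,w_k)=\EE^\infty(u,u)$, giving equality in \eqref{141b}.

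I expect the main obstacle — really the only place demanding care — to be bookkeeping in the diagonal argument: one must make sure the block endpoints $N_k$ are chosen compatibly (strictly increasing and with $N_k\geq N(k)$) so that every sufficiently large $n$ falls into exactly one block, and that the recovery sequence is genuinely defined for \emph{all} $n$, including small $n$. This is entirely routine once set up. As an alternative to the explicit blocking, I could instead invoke the semigroup criterion quoted after Definition~\ref{DEF141}: Mosco convergence of $(\EE^n,\FF^n)$ to $(\EE^\infty,\FF^\infty)$ is equivalent to strong $L^2$-convergence $T^n_tf\to T^\infty_tf$ for all $f$ and $t\geq 0$, and for an increasing sequence of closed forms the associated semigroups (or resolvents) are known to converge to those of the closure of $\bigcup_n\FF^n$ — a classical monotone-convergence-of-forms theorem (cf. the reference to \cite{S78} already cited in the paper). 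I would, however, keep the direct Mosco verification as the primary proof since it is self-contained and matches the exposition of Proposition~\ref{THM22}.
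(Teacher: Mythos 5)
Your proposal is correct and follows essentially the same route as the paper: part (a) via $\FF^n\subset\FF^\infty$ plus lower semicontinuity of the fixed closed form $(\EE^\infty,\FF^\infty)$ under weak $L^2$-convergence, and part (b) via an approximating sequence from $\bigcup_{n\geq 1}\FF^n$ together with a re-indexing (blocking) argument to ensure $u_n\in\FF^n$. The only cosmetic difference is that the paper draws its approximating sequence from the core $\mathcal{C}=\bigcup_n\mathcal{C}_n$ of Lemma~\ref{LM31}, whereas you approximate directly from $\bigcup_n\FF^n$ using the definition of $\FF^\infty$ as the $\mathcal{A}_1$-closure, which is equally valid.
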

\begin{proof}
	We first prove (a) of Definition~\ref{DEF141}. Take a sequence of functions $\{u_n:n\geq 1\}$ in $L^2(E,m)$, which is weakly convergent to another function $u\in L^2(E,m)$. Without loss of generality, we may assume that $u_n\in \FF^n$ for any $n\geq 1$. It follows from $u_n\in \FF^n\subset \FF^\infty$ that 
	\[
		\EE^n(u_n,u_n)=\EE^\infty(u_n,u_n). 
	\]
Since $(\EE^\infty,\FF^\infty)$ is convergent to itself in the sense of Mosco, we have
\[
	\liminf_{n\rightarrow \infty}\EE^\infty(u_n,u_n)\geq \EE^\infty(u,u). 
\]
Thus we can deduce that 
\[
	\liminf_{n\rightarrow \infty}\EE^n(u_n,u_n)=\liminf_{n\rightarrow \infty}\EE^\infty(u_n,u_n)\geq \EE^\infty(u,u). 
\]

Finally, we prove (b) of Definition~\ref{DEF141}. For any function $u\in L^2(E,m)$, if $u\notin \FF^\infty$, then clearly $\EE^\infty(u,u)=\infty$, and \eqref{141b} is surely right. Otherwise, if $u\in \FF^\infty$, then it follows from Lemma~\ref{LM31} that we may take a sequence of functions $\{u_n:n\geq 1\}\subset \mathcal{C}$ such that $\|u_n-u\|_{\EE^\infty_1}\rightarrow 0$ as $n\rightarrow \infty$. Moreover, we may assume that $u_n\in \FF^n$ for any $n\geq 1$. In fact, there always exists an increasing sequence of integers $\{k_n:n\geq 1\}$ such that $u_n\in \FF^{k_n}$. For any $n\geq 1$ and $k_{n-1}\leq l<k_n$, set 
\[
	v_l:= u_{n-1}. 
\]
Then for any $l\geq 1$, $v_l\in \FF^{n-1}\subset \FF^l$, and clearly $\|v_l-u\|_{\EE^\infty_1}\rightarrow 0$ as $l\rightarrow \infty$. Hence we can replace $\{u_n:n\geq 1\}$ by $\{v_l:l\geq 1\}$ to realize our assumption.  Consequently, it follows from $u_n\in \FF^n$ that $\EE^n(u_n,u_n)=\EE^\infty(u_n,u_n)$. Furthermore, $\|u_n-u\|_{\EE^\infty_1}\rightarrow 0$ implies
\[
	\limsup_{n\geq 1}\EE^n(u_n,u_n)=\limsup_{n\geq 1}\EE^\infty(u_n,u_n)=\lim_{n\geq 1}\EE^\infty(u_n,u_n)=\EE^\infty(u,u). 
\]
That completes the proof. 
\end{proof}

\subsection{Mosco convergence of the cases \textbf{(D)} and \textbf{(U)}}\label{SECM2}

In this section, let us turn back to the special cases of 1-dimensional irreducible diffusion processes on $I$. More precisely, as in \S\ref{SEC1}, $(\mathcal{A},\mathcal{G})=(\EE^{(\mathtt{s},m)}, \FF^{(\mathtt{s},m)}_0)$ is the regular Dirichlet form on $L^2(I,m)$ associated with the scaling function $\ss$. Then all regular subspaces of $(\mathcal{A},\mathcal{G})$ may be characterized by the class of scaling functions
\begin{equation}\label{EQ32SSI}
\begin{aligned}
	\mathbf{S}_\mathtt{s}(I):=\bigg\{\tilde{\mathtt{s}}: \tilde{\mathtt{s}}~&\text{is a strictly increasing and continuous function on}~I,\\& \tilde{\mathtt{s}}(e)=0,~\tilde{\mathtt{s}}\ll \mathtt{s},~\frac{d\tilde{\mathtt{s}}}{d\mathtt{s}}=0~\text{or}~1,~d\mathtt{s}\text{-a.e.}\bigg\},
	\end{aligned}
\end{equation}
where $e$ is a fixed point of $I$. Furthermore, $\mathbf{S}_\ss(I)$ has the following equivalent expression:
\[
	\mathbf{G}_\mathtt{s}(I):=\bigg\{ G\subset I: \int_{G\cap (c,d)}d\mathtt{s}>0,~\forall c,d\in I, c<d\bigg\}.
\]
In other words, 
\[
	\SSS_\ss(I)\rightarrow \GGG_\ss(I),\quad \tilde{\ss}\mapsto G_{\tilde{\ss}}:=\bigg\{x\in I: \frac{d\tilde{\ss}}{d\ss}(x)=1\bigg\}
\]
is a bijective mapping, see Lemma~2.3 of \cite{SL15}. The set $G_{\tilde{\ss}} \in \GGG_\ss(I)$ of $\tilde{\ss}\in \SSS_\ss(I)$ is the characteristic set of associated regular subspace, which is an equivalence class in the sense of $d\ss$-a.e. Note that the regular subspace associated with scaling function $\tilde{\ss}$ or characteristic set $G_{\tilde{\ss}}$ may be written as $\big(\EE^{(\tilde{\ss},m)},\FF^{(\tilde{\ss}, m)}_0\big)$.

\subsubsection{Case \textbf{(D)}}

Now we turn to consider the extended cases of \textbf{(D)}, in which the Mosco limitation is not necessarily a real Dirichlet form. Assume that $G$ is a subset of $I$, which may not belong to $\GGG_\ss(I)$. Set $F:=G^c$ and define
\begin{equation}\label{EQ2GUF}
\begin{aligned}
	&\bar{\mathcal{F}}:=\bigg\{u\in \FF^{(\ss,m)}: \frac{du}{d\ss}=0,~d\ss\text{-a.e. on }F\bigg\}, \\
	&\bar{\mathcal{E}}(u,v):=\EE^{(\ss,m)}(u,v)=\frac{1}{2}\int_I \frac{du}{d\ss}\frac{dv}{d\ss}d\ss,\quad u,v\in \bar{\mathcal{F}}. 
\end{aligned}
\end{equation}
We first assert that $(\bar{\mathcal{E}},\bar{\mathcal{F}})$ is a Dirichlet form in the wide sense. 

\begin{lemma}\label{LM24}
	Let $G\subset I$ and the quadratic form $(\bar{\mathcal{E}},\bar{\mathcal{F}})$ be given above. Then $(\bar{\mathcal{E}},\bar{\mathcal{F}})$ is a Dirichlet form in the wide sense on $L^2(I,m)$. Furthermore, $(\bar{\mathcal{E}},\bar{\mathcal{F}})$ is a real Dirichlet form if and only if $G\in \GGG_\ss(I)$. 
\end{lemma}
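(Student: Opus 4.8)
The plan is to verify the defining properties of a Dirichlet form in the wide sense for $(\bar{\mathcal{E}},\bar{\mathcal{F}})$ one by one — bilinearity and symmetry are immediate from the formula, so the substance is closedness and the Markovian (Dirichlet) property — and then to characterize when $\bar{\mathcal{F}}$ is dense in $L^2(I,m)$. First I would observe that $\bar{\mathcal{F}}$ is a linear subspace of $\FF^{(\ss,m)}$: the condition $\tfrac{du}{d\ss}=0$ $d\ss$-a.e. on $F$ is clearly preserved under linear combinations. For closedness, let $\{u_k\}$ be $\bar{\mathcal{E}}_1$-Cauchy in $\bar{\mathcal{F}}$, where $\bar{\mathcal{E}}_1=\bar{\mathcal{E}}+(\cdot,\cdot)_m$; since $\bar{\mathcal{E}}_1$ agrees with $\EE^{(\ss,m)}_1$ on $\bar{\mathcal{F}}$ and $(\mathcal{A},\mathcal{G})=(\EE^{(\ss,m)},\FF^{(\ss,m)}_0)$ is a (regular) Dirichlet form — in particular closed — there is a limit $u\in \FF^{(\ss,m)}$ with $u_k\to u$ in $\EE^{(\ss,m)}_1$-norm, hence $\tfrac{du_k}{d\ss}\to \tfrac{du}{d\ss}$ in $L^2(I,d\ss)$. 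Passing to a subsequence converging $d\ss$-a.e. shows $\tfrac{du}{d\ss}=0$ $d\ss$-a.e.\ on $F$, so $u\in\bar{\mathcal{F}}$ and the convergence is in $\bar{\mathcal{E}}_1$-norm; thus $(\bar{\mathcal{E}},\bar{\mathcal{F}})$ is closed. (One should also confirm $u\in \FF^{(\ss,m)}$ rather than merely $\FF^{(\ss,m)}_0$, but that is automatic since the closure is taken inside $\FF^{(\ss,m)}_0\subset\FF^{(\ss,m)}$.)

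For the Dirichlet property, let $u\in\bar{\mathcal{F}}$ and let $\psi$ be a normal contraction on $\mathbf{R}$; I would first note $\psi\circ u\in\FF^{(\ss,m)}$ with $\EE^{(\ss,m)}(\psi\circ u,\psi\circ u)\le \EE^{(\ss,m)}(u,u)$, since $\FF^{(\ss,m)}$ is the domain of the (wide-sense) Dirichlet form $\EE^{(\ss,m)}$ — this is the standard chain-rule fact that $\tfrac{d(\psi\circ u)}{d\ss}=\psi'(u)\tfrac{du}{d\ss}$ holds $d\ss$-a.e.\ with $|\psi'(u)|\le 1$. The pointwise identity $\tfrac{d(\psi\circ u)}{d\ss}=\psi'(u)\tfrac{du}{d\ss}$ then forces $\tfrac{d(\psi\circ u)}{d\ss}=0$ $d\ss$-a.e.\ on $F$ because $\tfrac{du}{d\ss}$ already vanishes there, so $\psi\circ u\in\bar{\mathcal{F}}$ and $\bar{\mathcal{E}}(\psi\circ u,\psi\circ u)\le\bar{\mathcal{E}}(u,u)$. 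This establishes that $(\bar{\mathcal{E}},\bar{\mathcal{F}})$ is a Dirichlet form in the wide sense.

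For the final equivalence, I would argue as follows. If $G\in\GGG_\ss(I)$, then $G$ is (the $d\ss$-a.e.\ class of) the characteristic set of a genuine regular subspace, and $\bar{\mathcal{F}}$ coincides with the domain $\FF^{(\ss_G,m)}$ attached to the scaling function $\ss_G\in\SSS_\ss(I)$ determined by $G$ via the bijection recalled after \eqref{EQ32SSI}; since that regular subspace is by definition a regular Dirichlet form on $L^2(I,m)$, $\bar{\mathcal{F}}$ (and hence its subspace-with-boundary-conditions) is dense in $L^2(I,m)$, so $(\bar{\mathcal{E}},\bar{\mathcal{F}})$ is a real Dirichlet form — here one must be slightly careful that the boundary conditions distinguishing $\bar{\mathcal{F}}$ from $\FF^{(\ss_G,m)}_0$ only remove a finite-dimensional piece and do not affect $L^2$-denseness. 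Conversely, if $G\notin\GGG_\ss(I)$, then by definition there exist $c<d$ in $I$ with $\int_{G\cap(c,d)}d\ss=0$, i.e.\ $d\ss$-a.e.\ on $(c,d)$ one has $x\in F$; hence every $u\in\bar{\mathcal{F}}$ satisfies $\tfrac{du}{d\ss}=0$ $d\ss$-a.e.\ on $(c,d)$, which (by continuity and absolute continuity with respect to $\ss$) means $u$ is constant on $(c,d)$. Since the functions in $L^2(I,m)$ that are constant on a fixed nonempty open interval form a proper closed subspace (here using that $m$ has full support, so $m((c,d))>0$), $\bar{\mathcal{F}}$ cannot be dense in $L^2(I,m)$, so $(\bar{\mathcal{E}},\bar{\mathcal{F}})$ is not a real Dirichlet form. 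The main obstacle I anticipate is the converse denseness argument: pinning down exactly why "$\tfrac{du}{d\ss}=0$ $d\ss$-a.e.\ on $(c,d)$" upgrades to "$u$ constant on $(c,d)$" — this uses $u\ll\ss$ together with continuity of $u$ and $\ss$, so that $u(y)-u(x)=\int_x^y \tfrac{du}{d\ss}\,d\ss=0$ for all $c<x<y<d$ — and then checking that this genuinely obstructs $L^2(I,m)$-denseness; the forward direction also requires a small amount of care to match $\bar{\mathcal{F}}$ with the known regular-subspace domain rather than with $\FF^{(\ss_G,m)}_0$.
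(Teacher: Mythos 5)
Your overall route is the same as the paper's: closedness via an $\bar{\mathcal{E}}_1$-Cauchy sequence and a $d\ss$-a.e.\ convergent subsequence of the derivatives, the Markov property via the chain-rule/contraction bound $\big|\tfrac{d(\psi\circ u)}{d\ss}\big|\le\big|\tfrac{du}{d\ss}\big|$, the ``if'' direction by identifying $\bar{\mathcal{F}}$ with the (reflected) domain $\FF^{(\ss_G,m)}$ attached to the scaling function of $G$ (this is exactly Lemma~3.1 of \cite{SL15}, which the paper cites), and the ``only if'' direction by producing an interval $(c,d)$ with $d\ss(G\cap(c,d))=0$, on which every element of $\bar{\mathcal{F}}$ is constant, so that $\bar{\mathcal{F}}$ cannot be $L^2(I,m)$-dense.

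There is, however, one step that does not go through as written: in the closedness argument you appeal to the closedness of $(\mathcal{A},\mathcal{G})=(\EE^{(\ss,m)},\FF^{(\ss,m)}_0)$. But $\bar{\mathcal{F}}$ is carved out of the \emph{full} space $\FF^{(\ss,m)}$, with no boundary condition at $\ss$-regular endpoints, so an $\bar{\mathcal{E}}_1$-Cauchy sequence in $\bar{\mathcal{F}}$ need not lie in $\mathcal{G}=\FF^{(\ss,m)}_0$ at all, and closedness of $(\mathcal{A},\mathcal{G})$ gives no limit; your parenthetical remark (``automatic since the closure is taken inside $\FF^{(\ss,m)}_0\subset\FF^{(\ss,m)}$'') has the containment issue backwards. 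The fact you actually need, and the one the paper uses, is that the larger form $(\EE^{(\ss,m)},\FF^{(\ss,m)})$ is itself closed on $L^2(I,m)$; with that substitution the rest of your subsequence argument is exactly the paper's. Two smaller points: a normal contraction $\psi$ need not be differentiable, so ``$\psi'(u)\tfrac{du}{d\ss}$'' should be read as the Lipschitz estimate $\big|\tfrac{d(\psi\circ u)}{d\ss}\big|\le\big|\tfrac{du}{d\ss}\big|$ $d\ss$-a.e.\ (the paper is equally informal here); and your worry about boundary conditions in the ``if'' direction is unnecessary --- since $\FF^{(\ss_G,m)}_0\subset\FF^{(\ss_G,m)}=\bar{\mathcal{F}}$ and the regular subspace with characteristic set $G$ is a regular Dirichlet form, density of $\bar{\mathcal{F}}$ in $L^2(I,m)$ is immediate from the inclusion.
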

\begin{proof}
	For the first assertion, we only need to prove the closeness and Dirichlet property of $(\bar{\mathcal{E}},\bar{\mathcal{F}})$. Let $\{u_n:n\geq 1\}\subset \bar{\mathcal{F}}$ be an $\bar{\mathcal{E}}_1$-Cauchy sequence in $\bar{\mathcal{F}}$. Since $\bar{\mathcal{F}}\subset \FF^{(\ss,m)}$ and $(\EE^{(\ss,m)},\FF^{(\ss,m)})$ is closed, it follows that there is a function $u\in \FF^{(\ss,m)}$ such that $u_n\rightarrow u$ with the norm $\|\cdot\|_{\EE_1^{(\ss,m)}}$ as $n\rightarrow \infty$. Particularly, a subsequence of $\{u_n:n\geq 1\}$, which is still denoted by $\{u_n:n\geq 1\}$, satisfies
	\[
		\frac{du_n}{d\ss}\rightarrow \frac{du}{d\ss},\quad d\ss\text{-a.e.}
	\]
as $n\rightarrow \infty$. From $du_n/d\ss=0$, $d\ss$-a.e. on $F$ for any $n$, we can deduce that $du/d\ss=0$, $d\ss$-a.e. on $F$. Hence $u\in \bar{\mathcal{F}}$ and $\|u_n-u\|_{\bar{\mathcal{E}}_1}\rightarrow 0$ as $n\rightarrow \infty$. That implies the closeness of $(\bar{\mathcal{E}},\bar{\mathcal{F}})$. For the Dirichlet property of $(\bar{\mathcal{E}},\bar{\mathcal{F}})$, take a function $u=\varphi\circ \ss\in \bar{\mathcal{F}}$ and assume that $\psi$ is a normal contraction. Note that $du/d\ss=\varphi'\circ \ss$ and $d(\psi\circ u)/d\ss=(\psi\circ \varphi)'\circ \ss=\psi'\circ \varphi\circ \ss\cdot \varphi'\circ \ss$. Clearly, $\psi\circ u\in \FF^{(\ss,m)}$ and 
\[
\EE^{(\ss,m)}(\psi\circ u,\psi\circ u)\leq \EE^{(\ss,m)}(u,u). 
\]
 It follows from $du/d\ss=0$, $d\ss$-a.e. on $F$ that 
\[
	\frac{d\psi\circ u}{d\ss}=0, \quad d\ss\text{-a.e. on }F. 
\]
Hence $\psi\circ u\in \bar{\mathcal{F}}$ and $\bar{\mathcal{E}}(\psi\circ u, \psi\circ u)\leq \bar{\mathcal{E}}(u,u)$. 

Finally, if $G\in \GGG_\ss(I)$, then clearly $(\bar{\mathcal{E}},\bar{\mathcal{F}})$ is a real Dirichlet form (see Lemma~3.1 of \cite{SL15}). Otherwise, if $G\notin \GGG_\ss(I)$, then there is an open interval $(c,d)\subset I$ such that $(c,d)\subset F$, $d\ss$-a.e. In particular, any function $u$ in $\bar{\mathcal{F}}$ satisfies $du/d\ss=0$, $d\ss$-a.e. on $(c,d)$. It follows that $u$ is a constant on $(c,d)$, which indicates that $\bar{\mathcal{F}}$ cannot be dense in $L^2(I,m)$. In other words, $(\bar{\mathcal{E}},\bar{\mathcal{F}})$ is only a Dirichlet form in the wide sense. That completes the proof. 
\end{proof}

Assume that $\{G_n: n\geq 1\}\subset \GGG_\ss(I)$ is a decreasing sequence of sets, and for each $n$, $\ss_n$ is the associated scaling function of $G_n$. Note that $G_n$ is defined in the sense of $d\ss$-a.e., hence the decreasing sequence is also in the sense of $d\ss$-a.e. Define $(\bar{\EE}^n,\bar{\FF}^n):= (\EE^{(\ss_n,m)}, \FF^{(\ss_n,m)})$. Let 
\begin{equation}\label{EQ2GNG}
	G:=\bigcap_{n\geq 1} G_n,
\end{equation}
which is a subset of $I$, and the quadratic form $(\bar{\mathcal{E}},\bar{\mathcal{F}})$ is defined by \eqref{EQ2GUF} with respect to this subset $G$.  The following corollary may be regarded as an extension of Theorem~3.1 of \cite{SL15}.

\begin{theorem}\label{COR25}
	Let $\{G_n:n\geq 1\}$, $G$ and the quadratic form $(\bar{\mathcal{E}},\bar{\mathcal{F}})$ be given above. Then $\left(\bar{\EE}^n,\bar{\FF}^n\right)$ is convergent to $(\bar{\mathcal{E}},\bar{\mathcal{F}})$ in the sense of Mosco as $n\rightarrow \infty$.
\end{theorem}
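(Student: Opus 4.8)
The plan is to identify $(\bar{\EE}^n, \bar{\FF}^n)$ with a decreasing sequence of closed forms in the wide sense and then apply Proposition~\ref{THM22} (or rather its proof, which never used regularity). Writing $F_n := G_n^c$ and $F := G^c = \bigcup_{n\geq 1} F_n$, one checks directly from the definition \eqref{EQ2GUF} that
\[
	\bar{\FF}^n = \FF^{(\ss_n,m)} = \bigg\{u\in \FF^{(\ss,m)}: \frac{du}{d\ss}=0,~d\ss\text{-a.e. on }F_n\bigg\},
\]
because $d\ss_n/d\ss = \mathbf 1_{G_n}$ and $du/d\ss_n\in L^2(d\ss_n)$ translates into $du/d\ss$ vanishing off $G_n$; on $\bar{\FF}^n$ we have $\bar{\EE}^n(u,v) = \frac12\int_I \frac{du}{d\ss}\frac{dv}{d\ss}\,d\ss$, matching $\bar{\mathcal E}$ restricted to $\bar{\FF}^n$. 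Since $F_n\uparrow F$, the condition ``$du/d\ss=0$ $d\ss$-a.e. on $F_n$'' becomes more restrictive as $n$ grows, so $\bar{\FF}^1\supset \bar{\FF}^2\supset\cdots$, and moreover $\bigcap_{n\geq 1}\bar{\FF}^n = \{u\in\FF^{(\ss,m)}: du/d\ss = 0~d\ss\text{-a.e. on }\bigcup_n F_n = F\} = \bar{\mathcal F}$, with $\bar{\mathcal E}$ being the common restriction. Thus $(\bar{\mathcal E},\bar{\mathcal F})$ is exactly the quadratic form obtained from the decreasing sequence $\{(\bar{\EE}^n,\bar{\FF}^n)\}$ via the recipe \eqref{EQ2FNF}.

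Next I would verify that the argument of Proposition~\ref{THM22} applies verbatim. That proposition is stated for a decreasing sequence of regular subspaces of a fixed regular Dirichlet form, but its proof uses only: (i) each $(\bar{\EE}^n,\bar{\FF}^n)$ is a closed form in the wide sense (which holds, as each is a genuine regular Dirichlet form here), (ii) $\bar{\FF}^1\supset\bar{\FF}^2\supset\cdots$ and the forms agree on the smaller domains, and (iii) $(\bar{\mathcal E},\bar{\mathcal F})$ is the intersection form, which by Lemma~\ref{LM24} is a closed form in the wide sense. The verification of the Mosco lower-bound condition (a) only invokes the fact that a constant sequence of closed forms Mosco-converges to itself together with monotonicity; the recovery-sequence condition (b) is trivial with $u_n:=u$. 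So the conclusion $(\bar{\EE}^n,\bar{\FF}^n)\to(\bar{\mathcal E},\bar{\mathcal F})$ in the Mosco sense follows.

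The one point needing genuine care — and the place I expect the main obstacle — is the identification $\bigcap_{n\geq 1}\bar{\FF}^n = \bar{\mathcal F}$ at the level of $d\ss$-equivalence classes: since each $G_n$ is only defined $d\ss$-a.e. and the decreasing relation $G_n\supset G_{n+1}$ holds $d\ss$-a.e., one must argue that a single representative choice can be made so that ``$du/d\ss = 0$ on $F_n$ for all $n$'' is equivalent to ``$du/d\ss = 0$ on $\bigcup_n F_n$'', using countability of the index set to discard a $d\ss$-null set. This is the same bookkeeping as in \cite{SL15} and is routine, but it is the step where the wide-sense limit form genuinely depends on $G$ only through its $d\ss$-class. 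Once that is in place, the Mosco convergence is immediate from Proposition~\ref{THM22}, so I would phrase the proof as: reduce to \eqref{EQ2FNF}, cite the lemma for well-definedness, and apply Proposition~\ref{THM22}.
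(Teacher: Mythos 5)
Your proposal is correct and follows essentially the same route as the paper: the paper likewise characterizes $\bar{\FF}^n$ as the functions in $\FF^{(\ss,m)}$ whose $d\ss$-derivative vanishes $d\ss$-a.e.\ on $G_n^c$ (citing Lemma~3.1 of \cite{SL15}, which you rederive directly), concludes $\bar{\FF}^1\supset\bar{\FF}^2\supset\cdots$ and $\bigcap_{n\geq 1}\bar{\FF}^n=\bar{\mathcal{F}}$, and then invokes Proposition~\ref{THM22}, whose proof (as the paper notes) does not require regularity of the forms. Your extra remarks on the $d\ss$-a.e.\ bookkeeping and on why Proposition~\ref{THM22} applies to the wide-sense limit are sound and consistent with Lemma~\ref{LM24}.
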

\begin{proof}
	Since $G_n$ is decreasing relative to $n$, it follows from Lemma~3.1 of \cite{SL15} that 
	\[
		\bar{\FF}^1\supset \bar{\FF}^2\supset \cdots \supset \bar{\FF}^n\supset \cdots. 
	\]
Thus from Proposition~\ref{THM22}, we know that it suffices to prove 
\[
	\bigcap_{n\geq 1}\bar{\FF}^n=\bar{\mathcal{F}}. 
\]
In fact, by Lemma~3.1 of \cite{SL15}, $u\in \cap_{n\geq 1}\bar{\FF}^n$, if and only if $u\in \mathcal{G}$ and $du/d\ss=0$, $d\ss$-a.e. on $G_n^c$ for any $n\geq 1$, which implies that
\[
	\frac{du}{d\ss}=0,\quad d\ss\text{-a.e. on }\bigcup_{n\geq 1} G_n^c= \left(\bigcap_{n\geq 1}G_n\right)^c=G^c. 
\]
It follows from \eqref{EQ2GUF} that $u\in \cap_{n\geq 1}\bar{\FF}^n$ is equivalent to that $u\in \bar{\mathcal{F}}$. That completes the proof. 
\end{proof}

\begin{remark}
	The Dirichlet form $(\bar{\EE}^n,\bar{\FF}^n)$ differs to $(\EE^n,\FF^n)$ in \S\ref{SEC1}. Indeed, $(\bar{\EE}^n,\bar{\FF}^n)$ is the active reflected Dirichlet space of $(\EE^n,\FF^n)$. The obstacle that we may meet when directly considering $(\EE^n,\FF^n)$ has been explained in \S3 of \cite{SL15}. Some similar results on $(\EE^n,\FF^n)$ were also given in Corollary~3.3 of \cite{SL15}. Particularly, we need to point out if \textbf{(H1)} is satisfied, then neither $a$ nor $b$ is $\ss_n$-regular ($\ss_\infty$-regular). Thus for Case \textbf{(D)}, $(\EE^n,\FF^n)$ converges to $(\EE,\FF)$ in the sense of Mosco. 
\end{remark}

Note that  \eqref{EQ2GUF} is a real Dirichlet form, if and only if $G\in \GGG_\ss(I)$. Thus the Mosco limitation $(\bar{\mathcal{E}}, \bar{\mathcal{F}})$ in Theorem~\ref{COR25} is not a real Dirichlet form, if and only if the set $G$ defined by \eqref{EQ2GNG} does not belong to $\GGG_\ss(I)$. An extreme and interesting example is as follows. 

\begin{example}\label{EXA26}
Assume that $I=\RRR$, $m$ is the Lebesgue measure on $\RRR$. Furthermore, $\ss$ is the natural scaling function on $\RRR$, i.e. $\ss(x)=x$, $x\in \RRR$. Then $\FF^{(\ss,m)}=\FF^{(\ss,m)}_0$, and $\left(\EE^{(\ss,m)},\FF^{(\ss,m)}_0\right)$ is exactly the associated Dirichlet form $\left(\frac{1}{2}\mathbf{D},H^1(\RRR)\right)$ of $1$-dimensional Brownian motion on $L^2(\RRR)$. 

Let $Q=\{r_k: k\geq 1\}$ be the set of all rational numbers. For any $n\geq 1$, define
\[
	G_n:=\bigcup_{k\geq 1}\left(r_k-\frac{1}{2^{k+1}\cdot n}, r_k+\frac{1}{2^{k+1}\cdot n}\right). 
\]
Since $Q$ is dense in $\RRR$, one may easily check that $G_n\in \GGG_\ss(\RRR)$. We denote the associated scaling function of $G_n$ by $\ss_n$. On the other hand, clearly $G_n$ is decreasing relative to $n$, and 
\[
	G:=\bigcap_{n\geq 1}G_n. 
\]
Since the Lebesgue measure of $G_n$ is not more than $1/n$, it follows that the Lebesgue measure of $G$ equals $0$.
Let $(\bar{\mathcal{E}},\bar{\mathcal{F}})$ be the quadratic form \eqref{EQ2GUF} relative to the above set $G$. Apparently, one may check that
\[
	\bar{\mathcal{F}}=\{\mathbf{0}\}. 
\]
Then the associated semigoup $(T_t)_{t\geq 0}$ of $(\bar{\mathcal{E}},\bar{\mathcal{F}})$ is exactly $T_tu=u$ for any $u\in L^2(\RRR)$ and $t\geq 0$. From Theorem~\ref{COR25}, we can obtain that $\left(\EE^{(\ss_n,m)},\FF^{(\ss_n,m)}\right)$ is convergent to $(\bar{\mathcal{E}},\bar{\mathcal{F}})$ in the sense of Mosco. Note that $\left(\EE^{(\ss_n,m)},\FF^{(\ss_n,m)}\right)$ is a regular subspace of $\left(\frac{1}{2}\mathbf{D},H^1(\RRR)\right)$, and we denote its semigroup by $(T_t^n)_{t\geq 0}$. Therefore,
\[
	\|T_t^n u-u\|_m\rightarrow 0,\quad n\rightarrow \infty,
\]
for any $u\in L^2(\RRR)$ and $t\geq 0$. 
\end{example}

\subsubsection{Case \textbf{(U)}}

In Theorem~4.2 of \cite{SL15}, we have already considered the Mosco convergence of Case \textbf{(U)}. However, the conditions on increasing characteristic sets are too strict, i.e. $\{G_n: n\geq 1\}$ is a sequence of increasing open characteristic sets such that 
\[
	\bigcup_{n\geq 1}G_n=I
\]
in the pointwise sense. We say a characteristic set is open, if one of its $d\ss$-versions is open and we let it be this open version. For more details, see Theorem~4.2 of \cite{SL15}. 

From now on, we shall prove this kind of Mosco convergence under general settings. 
Note that in Case \textbf{(U)}
\[
	\{G_n:n\geq 1\}\subset \GGG_\ss(I)
\] 
is a sequence of increasing characteristic sets (in the sense of $d\ss$-a.e.)
and
\[
	G=\bigcup_{n\geq 1}G_n. 
\]
Clearly, $G\in \GGG_\ss(I)$. The associated scaling functions of $G_n$ and $G$ are denoted by $\ss_n$ and $\ss_\infty$. Then $(\EE^n,\FF^n)=(\EE^{(\ss_n,m)},\FF^{(\ss_n,m)}_0)$ and $(\EE,\FF)=(\EE^{(\ss_\infty,m)},\FF^{(\ss_\infty,m)}_0)$ are both regular subspaces of $(\mathcal{A},\mathcal{G})$. Note that $G_n$ and $G$ are not necessarily open. 

\begin{theorem}\label{COR33}
	For Case \textbf{(U)}, the regular subspace $(\EE^n,\FF^n)$ is convergent to $(\EE,\FF)$ in the sense of Mosco as $n\rightarrow \infty$. 
\end{theorem}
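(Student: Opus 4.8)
The plan is to reduce the statement to Proposition~\ref{THM32}, which already handles the Mosco convergence of an increasing sequence of regular subspaces to the closure $(\EE^\infty,\FF^\infty)$ of $\bigl(\mathcal A, \bigcup_{n\ge 1}\FF^n\bigr)$ in $(\mathcal G, \mathcal A_1)$. Since $\{(\EE^n,\FF^n):n\ge 1\}$ is by hypothesis an increasing sequence of regular subspaces of $(\mathcal A,\mathcal G)$ (the inclusions $\FF^n\subset\FF^{n+1}$ follow from $G_n\subset G_{n+1}$ $d\ss$-a.e. together with Lemma~3.1 of \cite{SL15}), Proposition~\ref{THM32} applies directly. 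The only thing left to prove is the identification
\[
	(\EE^\infty,\FF^\infty) = (\EE,\FF) = \bigl(\EE^{(\ss_\infty,m)},\FF^{(\ss_\infty,m)}_0\bigr),
\]
i.e.\ that the $\mathcal A_1$-closure of $\bigcup_{n\ge 1}\FF^n$ is exactly the regular subspace attached to the limiting characteristic set $G=\bigcup_n G_n$.

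First I would establish the inclusion $\FF^\infty\subset\FF$. For each $n$, every $u\in\FF^n=\FF^{(\ss_n,m)}_0$ satisfies $du/d\ss = 0$ $d\ss$-a.e.\ on $G_n^c\supset G^c$ (here I use the description of $\FF^n$ via its characteristic set from Lemma~3.1 of \cite{SL15}), hence $du/d\ss=0$ $d\ss$-a.e.\ on $G^c$, so $u\in\FF^{(\ss_\infty,m)}_0$ up to the boundary condition; one checks the boundary constraint is inherited as well since $G\supset G_n$ forces $\ss_\infty$-regularity of an endpoint to imply $\ss_n$-regularity. Because $\FF=\FF^{(\ss_\infty,m)}_0$ is $\mathcal A_1$-closed and contains $\bigcup_n\FF^n$, it contains the closure $\FF^\infty$.

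The substantive direction is $\FF\subset\FF^\infty$: given $u\in\FF^{(\ss_\infty,m)}_0$ I must approximate it in $\mathcal A_1$-norm by functions lying in $\bigcup_n\FF^n$. I expect this to be the main obstacle. The natural approach is to use a core: by Lemma~\ref{LM31} it suffices to approximate $u$ by elements of $\bigcup_n\mathcal C_n$ where $\mathcal C_n=\FF^n\cap C_c(I)$, and in fact it is enough to approximate functions of the form $u=\varphi\circ\ss_\infty$ with $\varphi\in C_c^\infty$, since such functions form a core of $(\EE,\FF)$. The idea is to truncate the generalized derivative $du/d\ss$ to the set $G_n$: since $du/d\ss$ already vanishes $d\ss$-a.e.\ on $G^c$ and $\mathbf 1_{G_n}\uparrow\mathbf 1_{G}$ $d\ss$-a.e., the functions $u_n$ defined by $du_n/d\ss := \mathbf 1_{G_n}\,du/d\ss$ (with $u_n(e)=u(e)$, say) satisfy $du_n/d\ss=0$ $d\ss$-a.e.\ on $G_n^c$, hence $u_n\in\FF^{(\ss_n,m)}$, and by dominated convergence $\|du_n/d\ss - du/d\ss\|_{L^2(d\ss)}\to 0$; one then checks $u_n\to u$ in $L^2(I,m)$ (using that $u_n-u$ is controlled uniformly and tends to $0$ pointwise, together with conservativeness-type integrability as in the remark after (H1), or simply the finiteness of the relevant $m$-integrals on compacts plus a tightness argument near the boundary). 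The delicate points are: (i) verifying the boundary conditions so that $u_n\in\FF^{(\ss_n,m)}_0$, not merely $\FF^{(\ss_n,m)}$, and (ii) handling the behaviour near the endpoints $a,b$, where the truncation argument must be combined with a cut-off so that $u_n$ has compact support and $L^2(m)$-convergence is not spoiled. Once $\FF^\infty=\FF$ is established, the theorem follows immediately from Proposition~\ref{THM32}.
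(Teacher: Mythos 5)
Your overall strategy is the same as the paper's: reduce to Proposition~\ref{THM32} and then identify the limit $(\EE^\infty,\FF^\infty)$ with $(\EE,\FF)$, which amounts to showing that core elements $u=\varphi\circ \ss_\infty$, $\varphi\in C_c^\infty(J_\infty)$, can be approximated in $\EE_1$-norm by elements of $\bigcup_n \FF^n$. The inclusion $\FF^\infty\subset\FF$ is unproblematic (the paper gets it from the fact that each $(\EE^n,\FF^n)$ is a regular subspace of $(\EE^{n+1},\FF^{n+1})$ and of $(\EE,\FF)$, citing Theorem~4.1 of \cite{FHY10}; your monotonicity argument for the boundary condition is consistent with that). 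Where you diverge is the construction of the approximants, and this is exactly where your proposal has a genuine gap rather than a routine check. Defining $u_n$ by $du_n/d\ss:=1_{G_n}\,du/d\ss$ with $u_n(e)=u(e)$ produces a function that is constant near each endpoint with the two constants equal to $u(e)+\int_e^{b}1_{G_n}\frac{du}{d\ss}\,d\ss$ and $u(e)-\int_a^{e}1_{G_n}\frac{du}{d\ss}\,d\ss$; these are in general nonzero (they only tend to $0$ as $n\to\infty$), so $u_n$ is typically neither in $L^2(I,m)$ nor in $\FF^{(\ss_n,m)}_0$. Your points (i) and (ii) are therefore not details to be ``checked'' but the crux of the argument, and the fallback you invoke --- ``conservativeness-type integrability as in the remark after (H1)'' --- is not available here: Theorem~\ref{COR33} is stated and proved without any of the hypotheses (\textbf{H1})--(\textbf{H3}).

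The cutoff route you mention can be made to work: multiply $u_n$ by a fixed cutoff $\chi=\psi\circ\ss_1\in\FF^1\subset\FF^n$ with $\psi\in C_c^\infty(J_1)$, $\chi\equiv 1$ on a neighbourhood of $\mathrm{supp}[u]$; since $\sup_I|u_n-u|\le \|\varphi'\|_\infty\int_{\mathrm{supp}[u]}|1_{G_n}-1_G|\,d\ss\to 0$ and $du_n/d\ss-du/d\ss=(1_{G_n}-1_G)\,\varphi'\circ\ss_\infty$ is bounded with fixed compact support, dominated convergence gives both the energy and the $L^2(m)$ convergence of $\chi u_n\to u$, and compact support settles the boundary condition. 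But this is not carried out in your proposal. The paper's choice avoids the whole difficulty: it takes $u_n:=\varphi\circ\ss_n$ (the same $\varphi$, composed with the approximating scale $\ss_n$ after checking $J_n\uparrow J_\infty$), which lies in the special standard core $C_c^\infty\circ\ss_n$ by construction and is supported in the fixed compact $W=\ss_N^{-1}(K)$, so that both $\|u_n-u\|_m\to 0$ and $\mathcal{A}(u_n-u,u_n-u)\to 0$ follow directly from bounded convergence. I recommend either adopting that choice of approximants or writing out the cutoff argument in full.
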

\begin{proof}
Note that 
\[
	\ss_n\ll \ss_{n+1},\quad \frac{d\ss_n}{d\ss_{n+1}}=1_{G_n},\quad d\ss_{n+1}\text{-a.e.}
\]
It follows that $(\EE^n,\FF^n)$ is also a regular subspace of $(\EE^{n+1},\FF^{n+1})$ (Cf. Theorem~4.1 of \cite{FHY10}). In particular, 
\[
	\FF^1\subset \FF^2\subset \cdots \subset \FF^n\subset \cdots\subset \FF. 
\]
Note that $C_c^\infty\circ \ss_n$ is a special standard core of $(\EE^n,\FF^n)$ for each $n\geq 1$. Hence from Proposition~\ref{THM32}, we only need to prove that 
\[
	\bigcup_{n\geq 1} C_c^\infty\circ \ss_n
\]
is dense in $\FF$ with the norm $\|\cdot\|_{\EE_1}$. 

Set $J_n:=\ss_n(I)$ and $J_\infty :=\ss_\infty(I)$. Clearly, $J_n$ and $J_\infty$ are open intervals and 
\[
	J_1\subset J_2\subset \cdots\subset J_n\subset\cdots \subset J_\infty. 
	\]
We assert that $\cup_{n\geq 1}J_n=J_\infty$. Actually, $\cup_{n\geq 1}J_n\subset J_\infty$. On the contrary, take a point $x\in I$. Without loss of generality, we may assume that $x>e$, where $e$ is the fixed point in \eqref{EQ32SSI}. Since $0<\ss_\infty(x)<\infty$, it follows that
\[
	\ss_\infty(x)=\int_e^x 1_G(y)d\ss(y)=\int_e^x\lim_{n\rightarrow \infty} 1_{G_n}(y)d\ss(y)=\lim_{n\rightarrow \infty}\int_e^x 1_{G_n}(y)d\ss(y),
\]
which implies 
\begin{equation}\label{EQ3SXS}
	\ss_\infty(x)=\lim_{n\rightarrow \infty} \ss_n(x). 
\end{equation}
All above convergent sequences are increasing relative to $n$. Since $I$ is open, we can find a point $z\in I$ such that $x<z$. Similarly, we may deduce that
\[
	\ss_\infty(z)=\lim_{n\rightarrow \infty}\ss_n(z)>\ss_\infty(x). 
\]
Hence there is an integer $M$ such that $\ss_M(z)>\ss_\infty(x)$. In particular, $\ss_\infty(x)\in J_M$. Therefore, we can obtain that $J_\infty\subset \cup_{n\geq 1}J_n$. 

Note that $C_c^\infty\circ \ss_\infty$ is a special standard core of $(\EE,\FF)$. Take a function $u=\varphi\circ \ss_\infty\in C_c^\infty\circ \ss_\infty$, i.e. $\varphi\in C_c^\infty(J_\infty)$. Since the support $\text{supp}[\varphi]$ of $\varphi$ is compact, we may find a bounded closed interval $K\subset J_\infty$ such that \[
0\in K,\quad \text{supp}[\varphi]\subset K. 
\]
Clearly, $K$ is a compact subset of $J_\infty$. It follows from
\[
	K\subset J_\infty=\bigcup_{n\geq 1}J_n
\]
that there exists an integer $N$ such that $K\subset \cup_{1\leq n\leq N}J_n=J_N$. That means, for any $n\geq N$, we may regard $\varphi$ as a function in $C_c^\infty(J_n)$ by letting $\varphi=0$ on $J_n\setminus K$. Furthermore, define
\[
	u_n:=\varphi\circ \ss_n, \quad n\geq N,
\]
where $\varphi$ is the above function in $C_c^\infty(J_n)$.  Clearly, $u_n\in C_c^\infty\circ \ss_n$. 

Finally, we shall prove that 
\[
	\|u_n-u\|_{\mathcal{A}_1}\rightarrow 0,\quad n\rightarrow \infty. 
\]
For any $n\geq N$, since $\text{supp}[\varphi]\subset K, 0\in K$, one may easily check that the support of $\varphi \circ \ss_n$ and $\varphi\circ \ss_\infty$ are both subsets of $W:=\ss_N^{-1}(K)$, which is a compact subset of $I$. Then it follows from \eqref{EQ3SXS} and dominated convergence theorem that
\begin{equation}
\begin{aligned}\label{EQ3NIU}
	\lim_{n\rightarrow \infty} &\int_I \left(u_n(x)-u(x)\right)^2m(dx)\\&=\lim_{n\rightarrow\infty} \int_W \left(\varphi\left(\ss_n(x)\right)-\varphi\left(\ss_\infty(x)\right)\right)^2m(dx) \\
	&= \int_W\lim_{n\rightarrow\infty} \left(\varphi\left(\ss_n(x)\right)-\varphi\left(\ss_\infty(x)\right)\right)^2m(dx) \\
	&=0. 
\end{aligned}\end{equation}
On the other hand, 
\[
\begin{aligned}
	\mathcal{A}(u_n&-u,u_n-u) \\
	&=\frac{1}{2}\int_I \left(\frac{du_n}{d\ss}-\frac{du}{d\ss}\right)^2d\ss \\
		&=\frac{1}{2}\int_I \left(\varphi'\circ \ss_n \cdot \frac{d\ss_n}{d\ss}-\varphi'\circ \ss_\infty\cdot \frac{d\ss_\infty}{d\ss}\right)^2d\ss \\
		&\leq \int_I \left(\varphi'\circ \ss_n \cdot \frac{d\ss_n}{d\ss}-\varphi'\circ \ss_n \cdot \frac{d\ss_\infty}{d\ss}\right)^2d\ss  \\&\qquad \quad + \int_I \left(\varphi'\circ \ss_n \cdot \frac{d\ss_\infty}{d\ss}-\varphi'\circ \ss_\infty\cdot \frac{d\ss_\infty}{d\ss}\right)^2d\ss. 
\end{aligned}\]
Denote the two integrations in the last term of above inequality by $\Phi(n)$ and $\Psi(n)$.
For any $n\geq N$, since $\text{supp}[\varphi']\subset K$, we can deduce that the support of $\varphi'\circ \ss_n$ is also a subset of $W$. Moreover, $\varphi'\circ \ss_n$ is bounded by $\|\varphi'\|_\infty$ for any $n\geq N$.  Hence
\[
 	\Phi(n)\leq \|\varphi'\|_\infty^2 \int_W \left(1_{G_n}(y)-1_G(y)\right)^2 d\ss(y).
\]
Since $W$ is compact, it follows from the bounded convergence theorem that 
\[
\begin{aligned}
	\lim_{n\rightarrow \infty} \Psi(n)& \leq \|\varphi'\|_\infty^2\lim_{n\rightarrow \infty} \int_W \left(1_{G_n}(y)-1_G(y)\right)^2 d\ss(y) \\ &=\|\varphi'\|_\infty^2 \int_W\lim_{n\rightarrow \infty} \left(1_{G_n}(y)-1_G(y)\right)^2 d\ss(y) 
	\\&=0. 
\end{aligned}\]
For another integration $\Psi(n)$, we have
\[
\begin{aligned}
	\Psi(n)&=\int_I \left(\varphi'\circ \ss_n(y)-\varphi'\circ \ss_\infty(y)\right)^2\left(\frac{d\ss_\infty}{d\ss}\right)^2d\ss
		\\ &=\int_W \left(\varphi'\circ \ss_n(y)-\varphi'\circ \ss_\infty(y)\right)^2d\ss_\infty.
\end{aligned}\]
Similarly to \eqref{EQ3NIU}, we can obtain that 
\[
	\lim_{n\rightarrow \infty} \Psi(n)=0.
\]
That completes the proof. 
\end{proof}

Now, we can reconsider Lemma~\ref{LM31}, in which $\mathcal{C}$ is not asserted to be a special standard core of $(\EE^\infty, \FF^\infty)$.  In particular, if $\mathcal{C}_n=\FF^n\cap C_c(E)$, then one may easily check that 
\[
	\mathcal{C}=\bigcup_{n\geq 1}\mathcal{C}_n=\left(\cup_{n\geq 1}\FF^n\right) \bigcap C_c(E)
\]
is a special standard core of $(\EE^\infty, \FF^\infty)$. But what about the cases that $\mathcal{C}_n$ is another special standard core of $(\EE^n,\FF^n)$? More precisely, if $\mathcal{C}_n$ is a special standard core of $(\EE^n,\FF^n)$ for each $n\geq 1$, whether $\mathcal{C}:=\cup_{n\geq 1}\mathcal{C}_n$ is always a special standard core of $(\EE^\infty,\FF^\infty)$.  The following example based on the Dirichlet forms in Theorem~\ref{COR33} indicates that the answer is negative. 

\begin{example}
We use the same notations as those in Theorem~\ref{COR33}, i.e. 
\[
(\EE,\FF)=(\EE^{(\ss_\infty,m)},\FF^{(\ss_\infty,m)}_0), \quad (\EE^n,\FF^n)=(\EE^{(\ss_n,m)},\FF^{(\ss_n,m)}_0),\quad n\geq 1,
\]
where $\ss_\infty,\ss_n$ corresponds to the characteristic sets $G$ and $G_n$, $G_n$ is increasing to $G$ in the sense of $d\ss$-a.e. as $n\rightarrow \infty$. Without loss of generality,  further assume that for any $n\geq 1$, 
\begin{equation}\label{EQ3GNG}
	d\ss(G_{n+1}\setminus G_n)>0. 
\end{equation}
 Note that $\mathcal{C}_n:=C_c^\infty\circ \ss_n$ is a special standard core of $(\EE^n,\FF^n)$, and $(\EE^n,\FF^n)$ is a proper regular subspace of $(\EE^{n+1},\FF^{n+1})$. Furthermore, we have proved in Theorem~\ref{COR33} that 
\[
	\mathcal{C}=\bigcup_{n\geq 1}\mathcal{C}_n=\bigcup_{n\geq 1} C_c^\infty\circ \ss_n
\]
is a core of $(\EE,\FF)$. However, we shall prove that $\mathcal{C}$ is not a subspace of $C_c(I)$, and hence it is not a special standard core of $(\EE,\FF)$. 

Since the coordinate function $f(x)=x$ is locally in $C_c^\infty$, it follows that $\ss_n$ is locally in $\mathcal{C}_n$ for any $n\geq 1$. Denote all functions that locally belong to $\mathcal{C}$ by $\mathcal{C}_\mathrm{loc}$. We first prove
\begin{equation}\label{EQ3SSC}
	\ss_1+\ss_2\notin \mathcal{C}_\mathrm{loc}.
\end{equation}
In fact, because of \eqref{EQ3GNG}, we may take a relatively compact open interval $(c,d)$ such that 
\[
	e\in (c,d),\quad (c,d)\subset [c,d]\subset I
\]
and 
\begin{equation}\label{EQ3GCD}
d\ss(G_1\cap (c,d))<d\ss(G_2\cap (c,d))<d\ss(G_3\cap (c,d)),
\end{equation}
where $e$ is the fixed point in \eqref{EQ32SSI}. Assume that $\ss_1+\ss_2\in \mathcal{C}_\mathrm{loc}$. Then there is a function $u\in \mathcal{C}$ such that $\ss_1+\ss_2=u$ on $(c,d)$. Particularly, $u$ may be written as $u=\varphi\circ \ss_k$ for some integer $k$ and $\varphi\in C_c^\infty(J_k)$. Thus
\[
	\ss_1\circ \ss_k^{-1}(x)+\ss_2\circ \ss_k^{-1}(x)=\varphi(x), \quad x\in (\ss_k(c),\ss_k(d)). 
\]
If $k=1$, then we can deduce that $\ss_2\circ \ss_1^{-1}$ is smooth on $(\ss_1(c),\ss_1(d))$. Note that $\ss_2\circ \ss^{-1}_1$ is strictly increasing. That implies the Lebesgue-Stieljes measure $d\ss_2\circ \ss_1^{-1}$ is absolutely continuous with respect to the Lebesgue measure, which is denoted by $|\cdot|$, on $(\ss_1(c),\ss_1(d))$. Take a set $H=\ss_1(G^c_1\cap (c,d))$. Then it follows from \eqref{EQ3GCD} that
\[
	|H|=d\ss_1(G_1^c\cap(c,d))=\int_{G^c_1\cap (c,d)}1_{G_1}d\ss=0,
\]
whereas
\[
\begin{aligned}
	d\ss_2\circ \ss_1^{-1}(H)&=d\ss_2(G^c_1\cap (c,d))=d\ss(G_2\cap G_1^c\cap (c,d))\\&=d\ss(G_2\cap (c,d))-d\ss(G_1\cap (c,d))>0. 
\end{aligned}\]
That contradicts to $d\ss_2\circ \ss^{-1}_1\ll |\cdot|$. If $k=2$, similarly we can deduce that $\ss_1\circ \ss_2^{-1}$ is smooth on $(\ss_2(c),\ss_2(d))$. Note that $\ss_1\circ \ss_2^{-1}(0)=\ss_1(e)=0$. On the other hand, for any $x\in (\ss_2(c),\ss_2(d))$, we have
\begin{equation}\label{EQ3SSX}
\begin{aligned}
	\ss_1\circ \ss^{-1}_2(x)&=\int_{\ss_2^{-1}(0)}^{\ss_2^{-1}(x)}1_{G_1}(y)d\ss(y)=\int_{\ss_2^{-1}(0)}^{\ss_2^{-1}(x)}1_{G_1}(y)\cdot 1_{G_2}(y)d\ss(y) \\
	&= \int_{\ss_2^{-1}(0)}^{\ss_2^{-1}(x)}1_{G_1}(y)d\ss_2(y)=\int_0^x 1_{G_1}(\ss_2^{-1}(z))dz \\
	&=\int_0^x 1_{\ss_2(G_1)}(z)dz. 
\end{aligned}\end{equation}
Nevertheless, from \eqref{EQ3GCD}, we can obtain 
\[
\begin{aligned}
	|\ss_2(G_1)\cap \left(\ss_2(c),\ss_2(d)\right)|&=|\ss_2\left(G_1\cap (c,d)\right)|=d\ss\left(G_1\cap (c,d)\right) \\
	&<d\ss(G_2\cap (c,d))=d\ss_2((c,d)) \\
	&=|\left(\ss_2(c),\ss_2(d)\right)|.
\end{aligned}\]
That indicates that the derivative of $\ss_1\circ \ss_2^{-1}$ on $(\ss_2(c),\ss_2(d))$ is not continuous, which contradicts to the smoothness of $\ss_1\circ \ss_2^{-1}$ on $(\ss_2(c),\ss_2(d))$. If $k\geq 3$, without loss of generality, we may only consider the case $k=3$. Clearly, $\ss_1\circ \ss^{-1}_3+\ss_2\circ \ss^{-1}_3$ is smooth on $(\ss_3(c),\ss_3(d))$. Similarly to \eqref{EQ3SSX}, we can obtain that for any $x\in (\ss_3(c),\ss_3(d))$,
\[
	\ss_1\circ \ss^{-1}_3(x)+\ss_2\circ \ss^{-1}_3(x)=\int_0^x \left(1_{\ss_3(G_1)}(z)+1_{\ss_3(G_2)}(z)\right)dz. 
\] 
However, $\ss_3(G_1)\subset \ss_3(G_2)$ and it follows from \eqref{EQ3GCD} that 
\[
	|\ss_3(G_2)\cap \left(\ss_3(c),\ss_3(d)\right)|<|\left(\ss_3(c),\ss_3(d)\right)|,
\]
which also contradicts to the smoothness of $\ss_1\circ \ss^{-1}_3+\ss_2\circ \ss^{-1}_3$.  That completes the proof of \eqref{EQ3SSC}. In the mean time, since $\ss_n$ is locally in $\mathcal{C}_n\setminus \mathcal{C}_{n+1}$ and $\mathcal{C}_n\setminus \mathcal{C}_{n-1}$ for any $n\geq 1$ ($\mathcal{C}_0:=\emptyset$), we can also deduce that neither $\mathcal{C}_{n+1}\setminus \mathcal{C}_n$ nor $\mathcal{C}_n\setminus \mathcal{C}_{n+1}$ is empty. In other words, $\mathcal{C}_n$ is not increasing or decreasing relative to $n$. 

Now, we can prove that $\mathcal{C}$ is not a linear space. Indeed, since $\ss_1$ and $\ss_2$ are locally in $\mathcal{C}_1$ and $\mathcal{C}_2$ respectively, we may find two functions $u_1\in \mathcal{C}_1, u_2\in \mathcal{C}_2$ such that $\ss_1=u_1,\ss_2=u_2$ on $(c,d)$. If $\mathcal{C}$ is a linear space, then $u_1+u_2\in \mathcal{C}$, whereas
$\ss_1+\ss_2=u_1+u_2$ on $(c,d)$. That indicates $\ss_1+\ss_2\in \mathcal{C}_\mathrm{loc}$, which conduces to the contradiction. 

Furthermore, one may easily check that $\mathcal{C}$ satisfies all conditions of special standard core except for the linearity (surely, it is not an algebra either). 
\end{example}

\section{Proof of Theorem~\ref{THM1}}\label{SECP}

It is well known that we need to prove the weak convergence of finite dimensional distributions and the tightness of $\{\mathbf{P}^{\mu_n}_n:n\geq 1\}$. 

\subsection{Scale transform}
Let $\overset{\circ}{\ss}$ be the scaling function in \textbf{(H3)}. Clearly, $\overset{\circ}{\ss}$ is a strictly increasing and continuous function on $I$. Set 
\[
\overset{\circ}{J}:=\left\{	\overset{\circ}{\ss}(x):x\in I\right\}.
\]
Then $\overset{\circ}{J}$ is also an open interval of $\mathbf{R}$ and 
\[
	\overset{\circ}{\ss}: I\rightarrow \overset{\circ}{J}
\]
is a homeomorphism. Further set
\[
	\Omega_{\sss}:=\{\sss\circ \omega: \omega\in \Omega\}=C((0,\infty],\JJJ) \quad (\subset C([0,\infty), \mathbf{R}))
\]
and 
\[
	\eta_{\sss}: \Omega\rightarrow \Omega_{\sss},\quad \omega\mapsto \sss\circ \omega,
\]
where $\sss\circ \omega(t):=\sss(\omega(t)),~t\geq 0$. 
Let $\overset{\circ}{\tt}$ be the inverse of $\sss$, i.e. $\overset{\circ}{\tt}=\sss^{-1}$. Then $\overset{\circ}{\tt}$ is also a strictly increasing and continuous function, and the inverse of $\eta_{\sss}$ is actually 
\[
\eta_{\overset{\circ}{\tt}}: \Omega_{\sss}\rightarrow \Omega,\quad \omega\mapsto \overset{\circ}{\tt}\circ \omega.
\]
Since $\sss$ is bijective, it follows that $\eta_{\sss}$ is also bijective. Furthermore, we may also prove that $\eta_{\sss}$ is homeomorphic. 

\begin{lemma}\label{LM51}
	The mapping $\eta_{\sss}$ is homeomorphic.
\end{lemma}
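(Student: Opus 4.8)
The plan is to verify that both $\eta_{\sss}$ and its inverse $\eta_{\overset{\circ}{\tt}}$ are continuous with respect to the standard metric on $C([0,\infty),\mathbf{R})$, which induces the topology of locally uniform convergence. Since $\sss$ and $\overset{\circ}{\tt}=\sss^{-1}$ are mutually inverse strictly increasing continuous bijections between the open intervals $I$ and $\JJJ$, and $\eta_{\sss}$ is already known to be a bijection between $\Omega$ and $\Omega_{\sss}$, it suffices to prove sequential continuity of $\eta_{\sss}$; continuity of the inverse then follows by the symmetric argument with the roles of $\sss$ and $\overset{\circ}{\tt}$ interchanged.

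First I would take a sequence $\omega_k\to\omega$ in $\Omega$, meaning $\omega_k\to\omega$ uniformly on each compact time-interval $[0,T]$. I must show $\sss\circ\omega_k\to\sss\circ\omega$ uniformly on $[0,T]$. The key point is that $\{\omega_k(t): k\geq 1,\ t\in[0,T]\}\cup\{\omega(t):t\in[0,T]\}$ is contained in a compact subinterval $[\alpha,\beta]\subset I$: indeed $\omega([0,T])$ is compact in the open interval $I$, hence bounded away from the endpoints $a,b$, and uniform convergence of $\omega_k$ gives a single $[\alpha,\beta]$ containing all the relevant values for $k$ large, while the finitely many remaining $\omega_k$ each have compact image in $I$ as well. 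On the compact set $[\alpha,\beta]$ the continuous function $\sss$ is uniformly continuous, so $\sup_{t\in[0,T]}|\sss(\omega_k(t))-\sss(\omega(t))|\to 0$ follows from $\sup_{t\in[0,T]}|\omega_k(t)-\omega(t)|\to 0$. This gives locally uniform convergence $\sss\circ\omega_k\to\sss\circ\omega$, i.e. continuity of $\eta_{\sss}$ at $\omega$.

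For the inverse, I would repeat the identical argument: given $\tilde\omega_k\to\tilde\omega$ in $\Omega_{\sss}$, their images lie in a compact subinterval of $\JJJ$, on which $\overset{\circ}{\tt}$ is uniformly continuous, so $\overset{\circ}{\tt}\circ\tilde\omega_k\to\overset{\circ}{\tt}\circ\tilde\omega$ locally uniformly, proving $\eta_{\overset{\circ}{\tt}}$ is continuous. Together with the already-established bijectivity, this shows $\eta_{\sss}$ is a homeomorphism. I should also note in passing that $\Omega_{\sss}$ is exactly $C([0,\infty),\JJJ)$ viewed inside $C([0,\infty),\mathbf{R})$, which is why the same reasoning applies verbatim in both directions.

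The only mild subtlety — and the one step deserving care rather than being genuinely hard — is the compactness claim: one must use that a continuous path with values in the \emph{open} interval $I$ has image in a compact subinterval of $I$ on each time window $[0,T]$, and that locally uniform convergence lets one absorb the tail of the sequence into one such subinterval. Beyond that the argument is the routine fact that composition with a function that is uniformly continuous on compacta is continuous for the topology of locally uniform convergence, so there is no real obstacle.
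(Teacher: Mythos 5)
Your proposal is correct and follows essentially the same route as the paper: locally uniform convergence of paths, confinement of the relevant path values on $[0,T]$ to a compact subinterval of $I$, uniform continuity of $\sss$ there, and the symmetric argument with $\overset{\circ}{\tt}$ for the inverse. The only difference is that you spell out the inverse direction explicitly, which the paper dispenses with by symmetry.
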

\begin{proof}
We only need to prove $\eta_{\sss}$ is continuous. Let $\{\omega_n: n\geq 1\}$ be a sequence in $\Omega$ and $\omega$ another element in $\Omega$. Note that $\omega_n\rightarrow \omega$ in $\Omega$ if and only if for any fixed $T>0$, 
\begin{equation}\label{EQSTT}
\sup_{0\leq t\leq T}|\omega_n(t)-\omega(t)|\rightarrow 0,\quad n\rightarrow \infty. 
\end{equation}
In particular, there exists two constants $M_1, M_2$ such that $[M_1,M_2]\subset I$ and 
\[
	M_1\leq \omega_n(t)\leq M_2,\quad M_1\leq |\omega(t)|\leq M_2,\quad 0\leq t\leq T. 
	\]
Since $\sss$ is continuous on $I$, it follows that it is uniformly continuous on $[M_1,M_2]$. Thus for any $\epsilon>0$, there exists a constant $\delta>0$ such that for any $x,y\in [M_1,M_2]$ with $|x-y|<\delta$, 
\[
	|\sss(x)-\sss(y)|<\epsilon. 
\]
From \eqref{EQSTT}, we may deduce that there exists an integer $N$ such that for any $n>N$, 
\[
	\sup_{0\leq t\leq T}|\omega_n(t)-\omega(t)|<\delta. 
	\]
Hence 
\[
	\sup_{0\leq t\leq T}|\sss\circ \omega_n(t)-\sss\circ\omega(t)|<\epsilon,
\]
which implies 
\[
	\lim_{n\rightarrow \infty} \sup_{0\leq t\leq T}|\sss\circ \omega_n(t)-\sss\circ\omega(t)|=0.
\]
That completes the proof. 
\end{proof}

Define the following image measures on $\Omega_{\sss}$:
\[
	\mathbf{Q}^x_n:=\mathbf{P}^{\overset{\circ}{\tt}(x)}_n\circ \eta_{\sss}^{-1},\quad \mathbf{Q}^x:=\mathbf{P}^{\overset{\circ}{\tt}(x)}\circ \eta_{\sss}^{-1},\quad x\in \JJJ
\]
and 
\[
	\mathbf{Q}_n:= \mathbf{Q}^{\mu_n\circ \sss^{-1}}_n= \mathbf{P}^{\mu_n}_n\circ \eta_{\sss}^{-1},\quad \mathbf{Q}:= \mathbf{Q}^{\mu\circ \sss^{-1}}= \mathbf{P}^{\mu}\circ \eta_{\sss}^{-1}.
\]
Then 
\[
	(\Omega_{\sss}, Z=(Z_t)_{t\geq 0}, \mathbf{Q}^x_n)_{x\in \JJJ} \quad (\text{resp. }(\Omega_{\sss}, Z=(Z_t)_{t\geq 0}, \mathbf{Q}^x)_{x\in \JJJ})
\]
 is the associated coordinate-variable process of the spatial transformed process $\sss(\mathbf{X}^n)$ (resp. $\sss(\mathbf{X})$). 
 On the other hand, we also write $\mathbf{P}^{\mu_n}_n$ as $\mathbf{P}_n$ and $\mathbf{P}^\mu$ as $\mathbf{P}$ for short. Due to Lemma~\ref{LM51}, the following lemma is trivial and we omit its proof. 
 
 \begin{lemma}
 	The probability measure $\mathbf{P}_n$ is weakly convergent to $\mathbf{P}$ as $n\rightarrow \infty$ if and only if $\mathbf{Q}_n$ is weakly convergent to $\mathbf{Q}$ as $n\rightarrow \infty$. 
 \end{lemma}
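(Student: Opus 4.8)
The plan is to deduce the equivalence directly from Lemma~\ref{LM51} together with the mapping theorem for weak convergence (e.g. Theorem~2.7 of \cite{B99}). First I note that both $\Omega$ and $\Omega_{\sss}$ are separable metric spaces, being subspaces of $C([0,\infty),\mathbf{R})$, so weak convergence of probability measures on them is well defined and it is enough to test against bounded continuous functions. The identities $\mathbf{Q}_n=\mathbf{P}_n\circ\eta_{\sss}^{-1}$, $\mathbf{Q}=\mathbf{P}\circ\eta_{\sss}^{-1}$ and, dually, $\mathbf{P}_n=\mathbf{Q}_n\circ\eta_{\overset{\circ}{\tt}}^{-1}$, $\mathbf{P}=\mathbf{Q}\circ\eta_{\overset{\circ}{\tt}}^{-1}$ (since $\eta_{\overset{\circ}{\tt}}=\eta_{\sss}^{-1}$) will be the only structural input besides the homeomorphism property.

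For the forward implication, suppose $\mathbf{P}_n$ converges weakly to $\mathbf{P}$. For any bounded continuous $f$ on $\Omega_{\sss}$, the change-of-variables formula for image measures gives $\int_{\Omega_{\sss}} f\,d\mathbf{Q}_n=\int_\Omega f\circ\eta_{\sss}\,d\mathbf{P}_n$ and likewise for $\mathbf{Q}$ and $\mathbf{P}$. Since $\eta_{\sss}$ is continuous by Lemma~\ref{LM51}, $f\circ\eta_{\sss}$ is bounded and continuous on $\Omega$, whence $\int_\Omega f\circ\eta_{\sss}\,d\mathbf{P}_n\to\int_\Omega f\circ\eta_{\sss}\,d\mathbf{P}=\int_{\Omega_{\sss}}f\,d\mathbf{Q}$; this is exactly $\mathbf{Q}_n\Rightarrow\mathbf{Q}$. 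For the converse, I would run the identical argument with the roles of $(\Omega,\mathbf{P}_n,\mathbf{P})$ and $(\Omega_{\sss},\mathbf{Q}_n,\mathbf{Q})$ exchanged and with $\eta_{\sss}$ replaced by its inverse $\eta_{\overset{\circ}{\tt}}$, which is also continuous by Lemma~\ref{LM51}; this yields that $\mathbf{Q}_n\Rightarrow\mathbf{Q}$ forces $\mathbf{P}_n\Rightarrow\mathbf{P}$.

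There is essentially no obstacle: the only facts used are that precomposition with a continuous map sends bounded continuous functions to bounded continuous functions — which is immediate — and that $\eta_{\sss}$ is a homeomorphism, already proved in Lemma~\ref{LM51} and applied once to $\eta_{\sss}$ and once to its inverse. This is precisely why the lemma is trivial and its proof was omitted.
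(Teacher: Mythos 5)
Your argument is correct and is exactly the one the paper has in mind when it says the lemma is ``trivial'' due to Lemma~\ref{LM51}: since $\eta_{\sss}$ is a homeomorphism, the continuous mapping theorem (change of variables against bounded continuous test functions), applied once to $\eta_{\sss}$ and once to its inverse $\eta_{\overset{\circ}{\tt}}$, gives both implications. No gap; this simply writes out the omitted proof.
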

 
 This lemma indicates that we may do the spatial transform induced by $\sss$ on $\mathbf{X}^n$ and $\mathbf{X}$ simultaneously. The results in \cite{LY15} showed that the relation of associated Dirichlet forms (say $(\EE^n,\FF^n)$ and $(\EE,\FF)$) is invariant. Furthermore, the basic assumptions \textbf{(H1), (H2)} and \textbf{(H3)} are still satisfied. In other words, without loss of generality, we may assume that $\sss$ is the natural scaling function, i.e. 
 \[
 	\sss(x)=x,~x\in I.
 \] 
 Let $(\overset{\circ}{\EE},\overset{\circ}{\FF})$ be the associated regular subspace of $\sss$. 
The direct corollary is $C_c^\infty(I)\subset \overset{\circ}{\FF}$, which is equivalent to that the coordinate function $f(x)=x$ is locally in $\overset{\circ}{\FF}$, i.e. $f\in \overset{\circ}{\FF}_{\mathrm{loc}}$. Since $\overset{\circ}{\FF}$ is the smallest Dirichlet space in the sequence, we may deduce that for any $n\geq 1$, 
\[
	f\in \FF^n_\mathrm{loc}, \quad f\in \FF_\mathrm{loc}. 
\]
Then we can write the Fukushima's decompositions (hence Lyons-Zheng decompositions) with respect to $f$ for $\mathbf{X}^n$ and $\mathbf{X}$, which is an essential technique to prove the tightness of $\{\mathbf{P}_n:n\geq 1 \}$. 

\subsection{Weak convergence of finite dimensional distributions}\label{SEC32}

From now on, we always assume that $\sss(x)=x,~x\in I$. 

We use $\mathbf{E}^x_n$ (resp. $\mathbf{E}^x$) to stand for the expectation with respect to $\mathbf{P}^x_n$ (resp. $\mathbf{P}^x$). For any $0\leq t_0<t_1<\cdots <t_k<\infty$ and $f_i\in b\mathcal{B}(I)\cap L^2(I,m)$ ($0\leq i\leq k$), define
\[
\begin{aligned}
	\mathbf{E}_n\left[ f_0(Z_{t_0})\cdots f_k(Z_{t_k})\right]&:=\int_{x\in I} \mathbf{E}_n^x\left[ f_0(Z_{t_0})\cdots f_k(Z_{t_k})\right]\mu_n(dx), \\
	\mathbf{E}\left[ f_0(Z_{t_0})\cdots f_k(Z_{t_k})\right]&:=\int_{x\in I} \mathbf{E}^x\left[ f_0(Z_{t_0})\cdots f_k(Z_{t_k})\right]\mu(dx). 
	\end{aligned}
\]
We need to prove the following proposition. 

\begin{proposition}
As $n\rightarrow \infty$,
\begin{equation}\label{EQENF}
	\mathbf{E}_n\left[ f_0(Z_{t_0})\cdots f_k(Z_{t_k})\right]\rightarrow \mathbf{E}\left[ f_0(Z_{t_0})\cdots f_k(Z_{t_k})\right]. 
\end{equation}
\end{proposition}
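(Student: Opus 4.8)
The plan is to reduce the convergence of the $k$-fold product expectations to a statement about semigroups and then invoke the Mosco convergence established in Section~\ref{SEC2}. First I would write the mixed expectation as an iterated integral against the transition semigroups: with $(T^n_t)_{t\ge 0}$ and $(T_t)_{t\ge 0}$ the $L^2(I,m)$-semigroups of $\mathbf{X}^n$ and $\mathbf{X}$ (available since $(\EE^n,\FF^n)$, $(\EE,\FF)$ are Dirichlet forms on the same $L^2(I,m)$), the Markov property gives
\[
  \mathbf{E}_n\!\left[f_0(Z_{t_0})\cdots f_k(Z_{t_k})\right]
  = \int_I g_n(x)\, \big(T^n_{t_0}\big(f_0\cdot T^n_{t_1-t_0}(f_1\cdots T^n_{t_k-t_{k-1}}f_k)\big)\big)(x)\, m(dx),
\]
using $\mu_n(dx)=g_n(x)m(dx)$ from \textbf{(H2)}(ii), and similarly for the limit with $g$ and $T_t$. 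So it suffices to show the inner functions converge in $L^2(I,m)$ and then pair them against $g_n\to g$ in $L^2(I,m)$.

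The heart of the argument is an induction on $k$. By Proposition~\ref{THM22} (Case \textbf{(D)}) and Proposition~\ref{THM32} / Theorem~\ref{COR33} (Case \textbf{(U)}), the Mosco convergence of $(\EE^n,\FF^n)$ to $(\EE,\FF)$ is equivalent to $T^n_t h \to T_t h$ strongly in $L^2(I,m)$ for every $h\in L^2(I,m)$ and $t\ge 0$; note in Case \textbf{(D)} the conservativeness in \textbf{(H1)} identifies the Mosco limit $(\bar{\mathcal E},\bar{\mathcal F})$'s relevant part with $(\EE,\FF)$ as remarked after Theorem~\ref{COR25}. The induction hypothesis is that $h_n := f_1\cdot T^n_{t_2-t_1}(f_2\cdots T^n_{t_k-t_{k-1}}f_k)$ converges in $L^2(I,m)$ to the corresponding $h$. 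Since the $f_i$ are bounded, multiplication by $f_1$ preserves $L^2$-convergence; the remaining step is to pass from $h_n\to h$ to $T^n_{t_1-t_0}h_n \to T_{t_1-t_0}h$. This is a standard "convergence of operators plus convergence of arguments" lemma: write $T^n_s h_n - T_s h = T^n_s(h_n-h) + (T^n_s h - T_s h)$, bound the first term by $\|h_n-h\|_m$ using the contraction property of the sub-Markovian semigroup $T^n_s$, and send the second term to zero by Mosco convergence. Finally, multiplying by $f_0$ and applying $T^n_{t_0}$ once more, pairing against $g_n$ and using $g_n\to g$ in $L^2$ together with uniform $L^2$-boundedness of the inner function (again via contraction and boundedness of the $f_i$), yields \eqref{EQENF}.

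The step I expect to be the main (though mild) obstacle is the base case and the bookkeeping at the outermost layer: one must be careful that at the last pairing we are pairing an $L^2$-convergent sequence against the $L^1$-and-$L^2$ convergent densities $g_n$, so that $\int_I g_n \phi_n\, dm \to \int_I g\phi\, dm$ needs $\phi_n\to\phi$ in $L^2$ and $g_n\to g$ in $L^2$ — precisely what \textbf{(H2)}(ii) supplies; the $L^1$ part of \textbf{(H2)}(ii) is what handles the case $t_0=0$, where the outermost $T^n_{t_0}$ is the identity and one pairs $g_n$ directly against a bounded function. No tightness or path-space input is needed here; that is deferred to the tightness half of the proof of Theorem~\ref{THM1}. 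Thus the whole proposition is essentially a soft consequence of Mosco convergence plus the $L^2$-density convergence in \textbf{(H2)}.
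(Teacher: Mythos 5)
Your proposal is correct and follows essentially the same route as the paper: express the finite-dimensional expectation as an iterated composition of the semigroups $T^n_t$, use the Mosco convergence (Theorems~\ref{COR25} and \ref{COR33}) to get strong $L^2$ convergence of the semigroups, deduce $L^2$ convergence of the iterated expression, and conclude by pairing against $g_n\to g$ from \textbf{(H2)}(ii). The induction with the decomposition $T^n_s h_n - T_s h = T^n_s(h_n-h) + (T^n_s h - T_s h)$ and the contraction property merely makes explicit the step the paper leaves implicit in ``thus we may deduce.''
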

\begin{proof}
Let $(T^n_t)_{t\geq 0}$ and $(T_t)_{t\geq 0}$ be the semigroups of $\mathbf{X}^n$ and $\mathbf{X}$ respectively. Note that 
\[	
\mathbf{E}_n^x\left[ f_0(Z_{t_0})\cdots f_k(Z_{t_k})\right]=T_{t_0}^n\left(f_0\cdot T^n_{t_1-t_0}\left(\cdots \left(f_{k-1} \cdot T^n_{t_k-t_{k-1}}f_k \right) \cdots\right) \right)(x).
\]
On the other hand, since $(\EE^n,\FF^n)$ converges to $(\EE,\FF)$ in the Mosco sense (Cf. Theorem~\ref{COR25} and \ref{COR33}), we know that for any $t\geq 0$ and $f\in L^2(I,m)$, $\|T^n_tf-T_tf\|\rightarrow  0$ as $n\rightarrow \infty$. Thus we may deduce that
\[
	\mathbf{E}_n^\cdot\left[ f_0(Z_{t_0})\cdots f_k(Z_{t_k})\right]\rightarrow \mathbf{E}^\cdot \left[ f_0(Z_{t_0})\cdots f_k(Z_{t_k})\right]
\]
in $L^2(I,m)$. Then it follows from the second term of \textbf{(H2)} that \eqref{EQENF} holds. That completes the proof. 
\end{proof}

\subsection{Tightness}

Note that (see \cite{B99}) $\{\mathbf{P}_n:n\geq 1\}$ is tight if and only if 
\begin{itemize}
\item[(1)] $\liminf_{A\uparrow \infty} \mathbf{P}_n(|Z_0|\leq A)=1$;
\item[(2)] for any $\rho>0, T<\infty$,
\[
\lim_{\delta\downarrow 0}\limsup_{n\rightarrow \infty} \mathbf{P}_n\left(\sup_{0\leq s<t\leq T, t-s<\delta}|Z_t-Z_s|\geq \rho \right)=0. 
\]
\end{itemize}

\begin{proof}[Proof of ``tightness'']
For the first term (1), we have 
\[
	\mathbf{P}_n(|Z_0|\leq A)= \int_{x\in I}\mu_n(dx)\mathbf{P}_n^x(|Z_0|\leq A) =\mu_n( I\cap [-A,A]). 
\]
It follows from the first term of \textbf{(H2)} that (1) is right. Thus we only need to prove the second term in the above equivalent conditions. 

Fix $n$, $\rho>0$ and $T>0$. Since $f(x)=x$ is locally in $\FF^n$, we have the following Lyons-Zheng decomposition: for any $t,s>0$, 
\[
Z_t-Z_s=\frac{1}{2}(M^{[f]}_t-M^{[f]}_s)+\frac{1}{2}(M^{[f]}_{T-t}-M^{[f]}_{T-s})\circ r_T, \quad \mathbf{P}_n^m\text{-a.s.},
\]
where $r_T$ is the time reverse operator at $T$, i.e. $Z_t\circ r_T=Z_{T-t}$, and $M^{[f]}$ is the martingale part in Fukushima's decomposition with respect to $f$. In particular, for any $\mathcal{F}_T$-measurable function $F$, we have
\begin{equation}\label{EQEMN}
	\mathbf{E}^m_n\left(F\circ r_T\right)=\mathbf{E}^m_n(F). 
\end{equation}

We assert that the energy measure, denoted by $\mu_{\langle f \rangle}$, of $M^{[f]}$ equals $\sss(dx)$ (i.e. the Lebesgue measure on $I$). In fact, for any $u\in C_c^\infty(I)\subset b\FF^n$, 
\[
	\int_I u(x)\mu_{\langle f \rangle}(dx)=2\EE^n(f,f\cdot u)-\EE^n(f^2,u)=\int_I u(x) \left(\frac{df}{d\ss}\right)^2(x) \ss(dx).
\]
Note that $df/d\ss=d\sss/d\ss$ and $(d\sss/d\ss)^2=d\sss/d\ss$. 
Thus $\mu_{\langle f\rangle}(dx)=\sss(dx)$. Furthermore, it follows from \textbf{(H3)} that
\[
	\langle M^{[f]}\rangle_t=\int_0^t \frac{d\sss}{dm}(Z_s)ds,\quad \mathbf{P}^m_n\text{-a.s.}
\]
Therefore, for $m$-a.e. $x$, 
\[
\begin{aligned}
	\mathbf{P}^x_n&\left(\sup_{0\leq s<t\leq T, t-s<\delta}|Z_t-Z_s|\geq \rho \right)\\ &=\mathbf{P}^x_n\left(\sup_{0\leq s<t\leq T, t-s<\delta}\left|\frac{1}{2}(M^{[f]}_t-M^{[f]}_s)+\frac{1}{2}(M^{[f]}_{T-t}-M^{[f]}_{T-s})\circ r_T\right|\geq \rho \right)  \\
	&\leq \mathbf{P}^x_n\left(\sup_{0\leq s<t\leq T, t-s<\delta}\left|M^{[f]}_t-M^{[f]}_s\right|\geq \rho \right) \\ &\qquad \qquad + \mathbf{P}^x_n\left(\sup_{0\leq s<t\leq T, t-s<\delta}\left|\left(M^{[f]}_{T-t}-M^{[f]}_{T-s}\right)\circ r_T\right|\geq \rho \right) \\
	&=2\mathbf{P}^x_n\left(\sup_{0\leq s<t\leq T, t-s<\delta}\left|M^{[f]}_t-M^{[f]}_s\right|\geq \rho \right). 
\end{aligned}\]
The last equality is deduced from \eqref{EQEMN}. Clearly, 
\[
	\left\{ \sup_{0\leq s<t\leq T, t-s<\delta}\left|M^{[f]}_t-M^{[f]}_s\right|\geq \rho\right\}=\left\{ \sup_{0\leq s<t\leq T, t-s<\delta}\left| B_{\langle M^{[f]}\rangle_t}-B_{\langle M^{[f]}\rangle_s}\right|\geq \rho\right\},
\]
where $B=(B_t)_{t\geq 0}$ is a $\mathbf{P}^x_n$-Brownian motion. Note that $d\sss/dm$ is bounded and denote its bound by $C$. Particularly, $\langle M^{[f]}\rangle_t\leq C\cdot t$. Hence
\[
	\left\{ \sup_{0\leq s<t\leq T, t-s<\delta}\left|M^{[f]}_t-M^{[f]}_s\right|\geq \rho\right\}\subset \left\{ \sup_{0\leq s<t\leq C\cdot T, t-s<C\cdot\delta}\left| B_t-B_s\right|\geq \rho\right\}.
\]
Then we can deduce that
\[
	\mathbf{P}^x_n\left(\sup_{0\leq s<t\leq T, t-s<\delta}|Z_t-Z_s|\geq \rho \right)\leq 2 \mathbf{P}^x_n\left( \sup_{0\leq s<t\leq C\cdot T, t-s<C\cdot\delta}\left| B_t-B_s\right|\geq \rho\right),
\]
and the right side is actually independent of $n$. Finally we can easily check that 
\[
\begin{aligned}
	\lim_{\delta\downarrow 0}\limsup_{n\rightarrow \infty}& \mathbf{P}_n\left(\sup_{0\leq s<t\leq T, t-s<\delta}|Z_t-Z_s|\geq \rho \right) \\& \leq 2\lim_{\delta\downarrow 0}\limsup_{n\rightarrow\infty} \int_{x\in I} g_n(x)m(dx) \mathbf{P}^x_n\left( \sup_{0\leq s<t\leq C\cdot T, t-s<C\cdot\delta}\left| B_t-B_s\right|\geq \rho\right) \\
		&= 2\lim_{\delta\downarrow 0} \int_{x\in I} g(x)m(dx)\mathbf{P}^x_n\left( \sup_{0\leq s<t\leq C\cdot T, t-s<C\cdot\delta}\left| B_t-B_s\right|\geq \rho\right) \\
		&\rightarrow 0
	\end{aligned}
\]
as $n\rightarrow \infty$. That completes the proof. 
\end{proof}

\section*{Acknowledgement}
The authors would like to thank Professor P. J. Fitzsimmons from University of California at San Diego, for pointing out the relevant study \cite{S78} of two results in our paper (Proposition~\ref{THM22} and \ref{THM32}). The author at the first order also wants to thank Professor Ping He and Professor Minzhi Zhao for many helpful discussions.

\end{document}